\documentclass[12pt]{amsart}
\usepackage{graphicx}
\usepackage[usenames]{color}
\usepackage{amssymb, amsmath, amsthm, fullpage, xcolor, mathabx}
\usepackage[mathscr]{eucal}
\usepackage{hyperref}
\usepackage{comment}
\pagestyle{plain}

\newtheorem{theorem}{Theorem}[section]

\newtheorem{lemma}[theorem]{Lemma}

\newtheorem{proposition}[theorem]{Proposition}
\theoremstyle{definition}
\newtheorem{defn}[theorem]{Definition}

\newtheorem{remark}[theorem]{Remark}

\newtheorem*{theoremno}{Theorem}
\newtheorem*{props}{Proposition}

\numberwithin{theorem}{section} 
\numberwithin{equation}{section}


\newcommand{\Z}{\mathbb Z}
\newcommand{\N}{\mathbb N}
\newcommand{\R}{\mathbb R}
\newcommand{\C}{\mathbb C}
\newcommand{\Q}{\mathbb Q}
\newcommand{\bbH}{\mathbb{H}}

\newcommand{\sm}[4]{\left(\begin{smallmatrix}#1&#2\\ #3&#4 \end{smallmatrix} \right)}

\newcommand{\qs}{Q_{\boldsymbol{\zeta_n}}}
\newcommand{\qSubgroup}{\Gamma_{\boldsymbol{\zeta_n}}}
\newcommand{\lcm}{\textnormal{lcm}}
\newcommand{\bs}{\boldsymbol}

\allowdisplaybreaks

\begin{document}

\thanks{Acknowledgements:  The authors thank the Banff International Research Station (BIRS) and the Women in Numbers 4 (WIN4) workshop for the opportunity to initiate this collaboration.  The first author is grateful for the support of National Science Foundation grant DMS-1449679, and the Simons Foundation.}

\title{Quantum modular forms and singular combinatorial series with distinct roots of unity}
\author{Amanda Folsom, Min-Joo Jang, Sam Kimport, and Holly Swisher}

\maketitle
  
 \begin{abstract} Understanding the relationship between mock modular forms and quantum modular forms is a problem of current interest.  Both mock and quantum modular forms exhibit modular-like transformation properties under suitable subgroups of $\textnormal{SL}_2(\mathbb Z)$, up to nontrivial error terms; however, their domains (the upper half-plane $\mathbb H$, and the rationals $\mathbb Q$, respectively) are notably different.  Quantum modular forms, originally defined by Zagier in 2010, have  also been shown to be related to the diverse areas of colored Jones polynomials, meromorphic Jacobi forms, partial theta functions, vertex algebras, and more.  

 In this paper  we study the $(n+1)$-variable combinatorial rank generating function $R_n(x_1,x_2,\dots,x_n;q)$ for $n$-marked Durfee symbols.  These are $n+1$ dimensional multisums for $n>1$, and specialize    to the ordinary two-variable partition rank generating function when $n=1$.  The mock modular properties of $R_n$ when viewed as a function of $\tau\in\mathbb H$, with $q=e^{2\pi i \tau}$,  for various $n$ and fixed parameters $x_1, x_2, \cdots, x_n$, have been studied in a series of papers.  Namely, by Bringmann and Ono when $n=1$ and $x_1$ a root of unity; by Bringmann when $n=2$ and $x_1=x_2=1$; by Bringmann, Garvan, and Mahlburg for $n\geq 2$ and $x_1=x_2=\dots=x_n=1$; and by the first and third authors for $n\geq 2$ and the $x_j$ suitable roots of unity ($1\leq j \leq n$).    

The quantum modular properties of $R_1$ readily follow from existing results.  Here, we focus our attention on the case $n\geq 2$, and prove for any $n\geq 2$ that the combinatorial generating function $R_n$ is a quantum modular form when viewed as a function of $x \in \mathbb Q$, where $q=e^{2\pi i x}$,  and the $x_j$ are suitable distinct roots of unity.    
\end{abstract}
 
\section{Introduction and Statement of results}\label{intro}

  \subsection{Background} 
   
A \emph{partition} of a positive integer $n$ is any non-increasing sum of positive integers that adds to $n$.  Integer partitions and modular forms are beautifully and intricately linked, due to the fact that the generating function for the partition function $p(n):= \# \{\mbox{partitions of } n \}$, is related to Dedekind's eta function $\eta(\tau)$, a weight $\frac12$ modular form defined by
\begin{align}\label{def_eta} 
\eta(\tau) := q^{\frac{1}{24}}\prod_{n=1}^\infty (1-q^n). 
\end{align}
Namely,
\begin{equation}\label{p-eta} 
1 + \sum_{n=1}^\infty p(n)q^n = \frac{1}{(q;q)_{\infty}} = q^{\frac{1}{24}}\eta(\tau)^{-1},
\end{equation}
where here and throughout this section $q:=e^{2\pi i \tau}$, $\tau \in \bbH:= \{x + i y \ | \ x \in \mathbb R, y \in \mathbb R^+\}$ the upper half of the complex plane, and the $q$-Pochhammer symbol is defined for $n\in\N_0\cup\{\infty\}$ by  $$(a)_n=(a;q)_n:=\prod_{j=1}^n (1-aq^{j-1}).$$  
 In fact,   the connections between partitions and modular forms  go much deeper, and one example of this is  given by  the combinatorial rank function.  Dyson \cite{Dyson} defined the {\em rank} of a partition to be its largest part minus its number of parts, and  the \emph{partition rank function} is defined by 
\[
N(m,n) := \# \{\mbox{partitions of } n \mbox{ with rank equal to } m \}.
\]   
For example, $N(7,-2)=2$, because precisely 2 of the 15 partitions of $n=7$ have rank equal to $-2$; these are $2+2+2+1$, and $3+1+1+1+1$.  
     
Partition rank functions have a rich history in the areas of combinatorics, $q$-hypergeometric series, number theory and modular forms.  As one particularly notable example, Dyson conjectured that the rank could be used to combinatorially explain Ramanujan's famous partition congruences modulo 5 and 7;  this conjecture was later proved by Atkin and Swinnerton-Dyer \cite{AtkinSD}.  

It is well-known that the associated two variable generating function for $N(m,n)$ may be expressed as a $q$-hypergeometric series \begin{align}\label{rankgenfn}  \sum_{m=-\infty}^\infty \sum_{n=0}^\infty N(m,n) w^m q^n =   \sum_{n=0}^\infty \frac{q^{n^2}}{(wq;q)_n(w^{-1}q;q)_n} =: R_1(w;q),\end{align}   noting here that $N(m,0)=\delta_{m0}$, where $\delta_{ij}$ is the Kronecker delta function.

Specializations in the $w$-variable of the rank generating function have been of particular interest in the area of modular forms.   For example, when $w= 1$, we have that
\begin{equation}\label{r1mock1}
R_1(1;q) = 1+ \sum_{n=1}^\infty   p(n) q^n = q^{\frac{1}{24}}\eta^{-1}(\tau)
\end{equation}
thus recovering \eqref{p-eta}, which shows that the generating function for $p(n)$ is (essentially\footnote{Here and throughout, as is standard in this subject for simplicity's sake, we may slightly abuse terminology and refer to a function as a modular form or other modular object when in reality it must first be multiplied by a suitable power of $q$ to transform appropriately. 
 }) the reciprocal of a weight $\frac12$ modular form.  

If instead we let $w=-1$, then 
\begin{equation}\label{r1mock2}
R_1(-1;q) =   \sum_{n=0}^\infty \frac{q^{n^2}}{(-q;q)_n^2} =: f(q).
\end{equation}
The function $f(q)$ is not a modular form, but one of Ramanujan's original third order mock theta functions.  

Mock theta functions, and more generally mock modular forms and harmonic Maass forms have been major areas of study.   In particular,  understanding how Ramanujan's mock theta functions fit into the theory of modular forms was a question that persisted from Ramanujan's death in 1920 until  
 the groundbreaking 2002 thesis of Zwegers \cite{Zwegers1}: we now know that Ramanujan's mock theta functions, a finite list of curious $q$-hypergeometric functions including $f(q)$, exhibit suitable modular transformation properties after they are \emph{completed} by the addition of certain nonholomorphic functions.  In particular, Ramanujan's mock theta functions are examples of \emph{mock modular forms}, the holomorphic parts of \emph{harmonic Maass forms}.  Briefly speaking, harmonic Maass forms, originally defined by Bruiner and Funke \cite{BF}, are nonholomorphic generalizations of ordinary modular forms that in addition to satisfying appropriate modular transformations, must be eigenfunctions of a certain weight $k$-Laplacian operator, and satisfy suitable growth conditions at cusps (see \cite{BFOR, BF, OnoCDM, ZagierB} for more).   
 
 Given that specializing $R_1$ at $w=\pm 1$ yields two different modular objects, namely an ordinary modular form and a mock modular form as seen in \eqref{r1mock1} and \eqref{r1mock2}, it is natural to ask about the modular properties of $R_1$ at other values of $w$.  Bringmann and Ono answered this question in \cite{BO}, and used the theory of harmonic Maass forms to prove  that upon specialization of the parameter $w$ to  complex roots of unity not equal to $1$, the rank generating function $R_1$  is also a mock modular form.  (See also \cite{ZagierB} for related work.) 
 
 \begin{theoremno}[\cite{BO} Theorem 1.1]  If $0<a<c$, then $$q^{-\frac{\ell_c}{24}}R_1(\zeta_c^a;q^{\ell_c}) + \frac{i \sin\left(\frac{\pi a}{c}\right) \ell_c^{\frac{1}{2}}}{\sqrt{3}} \int_{-\overline{\tau}}^{i\infty} \frac{\Theta\left(\frac{a}{c};\ell_c \rho\right)}{\sqrt{-i(\tau + \rho)}} d\rho $$ is a harmonic Maass form of weight $\frac{1}{2}$ on $\Gamma_c$.  
 \end{theoremno}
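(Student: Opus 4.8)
The plan is to derive the stated completion from Zwegers' theory of the Appell--Lerch function $\mu$, by first recasting the rank generating function \eqref{rankgenfn} as a single bilateral Lerch-type sum and then specializing the root of unity. The key starting identity is the classical Appell--Lerch representation
\[
R_1(w;q) \;=\; \frac{1-w}{(q;q)_\infty}\sum_{n=-\infty}^{\infty}\frac{(-1)^n q^{\frac{3n^2+n}{2}}}{1-wq^n},
\]
which one checks reduces to $R_1(1;q)=(q;q)_\infty^{-1}$ as $w\to1$ via the residue at $n=0$. Substituting $w=\zeta_c^a$ with $0<a<c$ produces a nonvanishing prefactor $1-\zeta_c^a=-2i\sin(\tfrac{\pi a}{c})e^{\pi i a/c}$, the source of the factor $\sin(\tfrac{\pi a}{c})$ in the completion; and since $\tau\in\bbH$ forces $|q|<1$, no denominator $1-\zeta_c^a q^n$ vanishes, so the series is a well-defined holomorphic function of $\tau$.

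Second, I would match this sum to Zwegers' $\mu(u,v;\tau)$. Because the quadratic exponent here is $\frac{3n^2+n}{2}$ whereas the natural exponent in $\mu$ is $\frac{n^2+n}{2}$, this matching requires a rescaling of the modular variable by a factor of $3$, which is precisely what introduces the normalizing constant $\tfrac{1}{\sqrt3}$; after this change of variables and the replacement $\tau\mapsto\ell_c\tau$ (with $\ell_c$ the least integer clearing the denominators coming from $\tfrac{a}{c}$, so that all $q$-exponents lie in $\tfrac{1}{\ell_c}\Z$), the sum becomes an elementary theta multiple of $\mu$ evaluated at a torsion point determined by $\zeta_c^a$. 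I would then invoke Zwegers' completion theorem \cite{Zwegers1}: the function $\widehat\mu=\mu+\tfrac{i}{2}\mathcal R$, where $\mathcal R$ is the nonholomorphic period integral of a weight-$\tfrac32$ unary theta function, transforms as a real-analytic Jacobi form of weight $\tfrac12$. Specializing the elliptic variables collapses $\widehat\mu$ to a function of $\tau$ alone whose holomorphic part reproduces $q^{-\frac{\ell_c}{24}}R_1(\zeta_c^a;q^{\ell_c})$ and whose completion term $\tfrac{i}{2}\mathcal R$ becomes exactly the stated Eichler integral $\tfrac{i\sin(\pi a/c)\ell_c^{1/2}}{\sqrt3}\int_{-\overline\tau}^{i\infty}\frac{\Theta(\frac{a}{c};\ell_c\rho)}{\sqrt{-i(\tau+\rho)}}\,d\rho$.

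Third, I would verify the two defining properties of a weight-$\tfrac12$ harmonic Maass form on $\Gamma_c$. For the transformation law, I would track the three ingredients --- the eta-quotient factor $(q;q)_\infty^{-1}=q^{1/24}\eta(\tau)^{-1}$, the elementary theta factor arising in the denominator of $\mu$, and the completion $\mathcal R$ --- under the generators of $\Gamma_c$: Zwegers' modularity of $\widehat\mu$ controls the $\mu$-piece, while the weight-$\tfrac12$ transformations of $\eta$ and the theta factor combine, with the scaling by $\ell_c$ ensuring the multiplier system closes on exactly $\Gamma_c$. For harmonicity and growth, I would use that $\mu$ is holomorphic and that $\tfrac{\partial}{\partial\overline\tau}\mathcal R$ equals an explicit antiholomorphic weight-$\tfrac32$ theta times $(\mathrm{Im}\,\tau)^{-1/2}$, so that $\xi_{1/2}$ sends the completed object to a weight-$\tfrac32$ holomorphic theta and hence $\Delta_{1/2}=-\xi_{3/2}\circ\xi_{1/2}$ annihilates it; the at-most-polynomial growth at each cusp is read off from the $q$-expansions of the theta ingredients.

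The main obstacle will be the bookkeeping in the second step: producing the exact unary theta $\Theta(\tfrac{a}{c};\ell_c\rho)$ attached to $\zeta_c^a$ with the correct normalization $\tfrac{\sin(\pi a/c)}{\sqrt3}$, and confirming that the resulting multiplier closes on precisely $\Gamma_c$ rather than a smaller subgroup. This demands carefully separating the bilateral Lerch sum by residue classes of $n$ modulo $c$, handling the $n=0$ term, and checking the transformation under both $\tau\mapsto\tau+1$ and $\tau\mapsto-1/\tau$ restricted to $\Gamma_c$ --- the step where the interaction between the factor-of-$3$ rescaling, the level $\ell_c$, and Zwegers' multiplier must be reconciled.
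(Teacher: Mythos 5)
The paper you are reading does not prove this statement at all: it is quoted verbatim from \cite{BO} as background, so your proposal can only be measured against Bringmann--Ono's original argument and against the machinery this paper deploys for $n \geq 2$. Measured that way, your outline is sound but follows a genuinely different route from the original proof in \cite{BO}, which never invokes Zwegers' $\mu$-function: there the bilateral Lerch sum is split into residue classes modulo $c$, the transformation formulas under the generators are derived from scratch by Poisson summation and Mordell-integral evaluations in the style of Watson, and the theta function $\Theta\left(\frac{a}{c};\cdot\right)$ and its period integral are assembled by hand. Your plan is instead precisely the $n=1$ specialization of the strategy of \cite{F-K} and of this paper: your opening identity is \cite[Proposition 4.2]{F-K} with $n=1$ (the bilateral sum is the level $3$ Appell function $A_3\left(\frac{a}{c},-2\tau;\tau\right)$ up to the prefactor $(1-w)/(q)_\infty$), and the second remark after Theorem \ref{thm_main_N0} says explicitly that $R_1$ can be treated this way. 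What your route buys is conceptual economy --- Zwegers' completion theorem replaces the contour-integral bookkeeping; what the original buys is explicit transformation formulas on the nose for $\Gamma_c$.

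Two concrete repairs are needed. First, the matching in your second step is not to $\mu$ with a simple rescaling $\tau \mapsto 3\tau$: because the exponent is $\frac{3n^2+n}{2}$, the sum is a \emph{level three} Appell function, whose completion kernel is the three-term sum $\mathscr{R}_3$ of \eqref{AminusDef}, namely $\frac{i}{2}\sum_{j=0}^{2} e^{2\pi i j u}\,\vartheta(v+j\tau;3\tau)\,R(3u-v-j\tau;3\tau)$ --- equivalently a decomposition into three $\mu$-functions at modular argument $3\tau$, not one; the factor $\frac{1}{\sqrt{3}}$ then emerges from Proposition \ref{Rtransform}(5) and Theorem \ref{thm_Zh2} applied at $3\tau$, and the three unary thetas must be recombined into the single cusp form $\Theta\left(\frac{a}{c};\ell_c\rho\right)$. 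Moreover, the second elliptic variable is the $\tau$-dependent point $v=-2\tau$, not a fixed torsion point, so ``specializing the elliptic variables'' requires the elliptic shift law (Theorem \ref{completeAtransform}) together with Proposition \ref{Rtransform}(2) before anything collapses to a function of $\tau$ alone; the surviving $R$-arguments $\frac{3a}{c}\pm\tau$ remain $\tau$-dependent, which is exactly why the $h(a\tau-b;\tau)$ formula of Theorem \ref{thm_Zh2} is the correct device for converting them to Eichler integrals. Second, your growth verification is wrong as stated: the completed function does \emph{not} have polynomial growth at the cusps --- its holomorphic part has principal part $q^{-\ell_c/24}$, so polynomial growth is false --- and the definition of harmonic Maass form used in \cite{BO} permits at most linear exponential growth, which is the condition you must actually check. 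With these repairs, your skeleton (Lerch representation, Zwegers completion, $\xi_{1/2}$-harmonicity, multiplier bookkeeping for $\Gamma_c$) is a viable proof.
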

\noindent Here, $\zeta_c^a := e^{\frac{2\pi ia}{c}}$ is a $c$-th root of unity, $\Theta\left(\frac{a}{c};\ell_c\tau\right)$ is a certain weight $3/2$ cusp form, $\ell_c:=\lcm(2c^2,24)$, and $\Gamma_c$ is a particular subgroup of $\textnormal{SL}_2(\mathbb Z)$. 

In this paper,  as well as in prior work of two of the authors \cite{F-K}, we study the related problem of understanding the modular properties of certain combinatorial $q$-hypergeometric series arising from objects called $n$-marked Durfee symbols, originally defined by Andrews in his notable work \cite{Andrews}.  

To understand $n$-marked Durfee symbols, we first describe Durfee symbols. For each partition, the Durfee symbol catalogs the size of its Durfee square, as well as the length of the columns to the right as well as the length of the rows beneath the Durfee square.  For example, 
below we have the partitions of $4$, followed by their Ferrers diagrams with any element belonging to their Durfee squares marked by a square $(\sqbullet)$,  followed by their Durfee symbols.
\[
\begin{array}{ccccc}
4 & 3+1 & 2+2 & 2+1+1 & 1+1+1+1 \\
\begin{array}{lllll}  \sqbullet & \bullet & \bullet & \bullet \end{array} & \begin{array}{lll} \sqbullet & \bullet & \bullet \\ \bullet & &  \end{array}  &    \begin{array}{ll}  \sqbullet & \sqbullet \\ \sqbullet & \sqbullet \end{array}  & \begin{array}{ll} \sqbullet & \bullet \\ \bullet & \\ \bullet &  \end{array} & \begin{array}{l}\sqbullet \\ \bullet \\ \bullet \\ \bullet  \end{array} \\
\hspace{2mm} \left( \begin{array}{lll} 1 & 1 & 1 \\  & &  \end{array} \right)_1 \hspace{2mm} & \hspace{2mm} \left( \begin{array}{ll} 1 & 1  \\  1 &  \end{array} \right)_1 \hspace{2mm} & \hspace{2mm} \left( \begin{array}{l}  \\    \end{array} \right)_2 \hspace{2mm} & \hspace{2mm} \left( \begin{array}{ll} 1 & \\  1 & 1 \end{array} \right)_1 \hspace{2mm} & \hspace{2mm} \left( \begin{array}{lll} & & \\ 1 & 1 & 1  \end{array} \right)_1 \hspace{2mm}\\
\end{array}
\]

Andrews defined the {\em rank} of a Durfee symbol to be the length of the partition in the top row, minus the length of the partition in the bottom row.  Notice that this gives Dyson's original rank of the associated partition.  Andrews refined this idea by defining $n$-marked Durfee symbols, which use $n$ copies of the integers.   For example, the following is a $3$-marked Durfee symbol of $55$, where $\alpha^j,\beta^j$ indicate the partitions in their respective columns.
\[
\left(
\begin{array}{cc|ccc|c}
4_3 & 4_3 & 3_2 & 3_2 & 2_2 & 2_1 \\
       & 5_3 &        & 3_2 & 2_2 & 2_1
\end{array}
\right)_5
=:
\left(
\begin{array}{c|c|c}
\alpha^3 & \alpha^2 & \alpha^1 \\
\beta^3 & \beta^2 & \beta^1
\end{array}
\right)_5
\]
Each $n$-marked Durfee symbol has $n$ ranks, one defined for each column.  Let  $\rm{len}(\pi)$ denote the  length of a partition $\pi$.  Then the $n$th rank is defined to be  $\rm{len}(\alpha^n) - \rm{len}(\beta^n)$, and each $j$th rank for $1\leq j <n$ is defined by $\rm{len}(\alpha^j) - \rm{len}(\beta^j) -1$.  Thus the above example has $3$rd rank equal to $1$, $2$nd rank equal to $0$, and $1$st rank equal to $-1$. 

Let $\mathcal{D}_n(m_1,m_2,\dots, m_n;r)$ denote the number of $n$-marked Durfee symbols arising from partitions of $r$ with $i$th rank equal to $m_i$.  In \cite{Andrews},  Andrews  showed that the $( n+1)$-variable rank generating function for Durfee symbols may be expressed in terms of certain $q$-hypergeometric series, analogous to (\ref{rankgenfn}).  To describe this, for $n\geq 2$, define 
{\small{\begin{align}\label{rkorigdef}
	&R_n({\boldsymbol{x}};q) := \\ & \nonumber \mathop{\sum_{m_1 > 0}}_{m_2,\dots,m_n \geq 0} \!\!\!\!\!\!\!\! \frac{q^{(m_1 + m_2 + \dots + m_n)^2 + (m_1 + \dots + m_{n-1}) + (m_1 + \dots + m_{n-2}) + \dots + m_1}}{(x_1q;q)_{m_1} \!\left(\frac{q}{x_1};q\right)_{m_1} \!\!\!\!(x_2 q^{m_1};q)_{m_2 + 1} \!\!\left(\frac{q^{m_1}}{x_2};q\right)_{m_2+1} \!\!\!\!\!\!\!\!\!\!\cdots(x_n q^{m_1 + \dots + m_{n-1}};q)_{m_n+1} \!\!\left(\!\frac{q^{m_1 + \dots + m_{n-1}}}{x_n};q\!\right)_{\! m_n+1}},\end{align}}}where ${\boldsymbol{x}} = {\boldsymbol{x}}_n := (x_1,x_2,\dots,x_n).$ For $n=1$, the function $R_1(x;q)$ is defined as the $q$-hypergeometric series in (\ref{rankgenfn}).  
In what follows, for ease of notation, we may also write $R_1({\boldsymbol{x}};q)$ to denote $R_1(x;q)$, with the understanding that ${\boldsymbol{x}} := x$.  
 In \cite{Andrews}, Andrews  established   the following result, generalizing (\ref{rankgenfn}).  
\begin{theoremno}[\cite{Andrews}  Theorem 10]  For $n\geq 1$ we have that \begin{align}\label{durfgenand1}\sum_{m_1,m_2,\dots,m_n = -\infty}^\infty \sum_{r=0}^\infty \mathcal{D}_n(m_1,m_2,\dots,m_n;r)x_1^{m_1}x_2^{m_2}\cdots x_n^{m_n}q^r = R_n({\boldsymbol{x}};q).\end{align}
\end{theoremno}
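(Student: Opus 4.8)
The plan is to prove Theorem 10 by a direct generating-function computation, showing that both sides enumerate $n$-marked Durfee symbols with prescribed ranks $m_1,\dots,m_n$ and prescribed size $r$, organized according to the Durfee square and the mark structure. I would fix an $n$-marked Durfee symbol with Durfee square of side $D$ and, writing $N_j := m_1 + m_2 + \cdots + m_j$ (so $N_0 = 0$ and $N_n = D$), parametrize such symbols by the side $D$ together with the boundary values $0 < N_1 \le N_2 \le \cdots \le N_{n-1} \le N_n = D$, where $N_j$ records the largest part carried by mark $j$; equivalently, by the composition $(m_1,\dots,m_n)$ with $m_1>0$ and $m_j\ge 0$. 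The key point is that the defining constraints of an $n$-marked Durfee symbol force the parts marked $j$ to lie in the range $[N_{j-1},N_j]$, with consecutive sections linked at their shared boundary value $N_j$.

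With the shape data fixed, I would factor the contribution as a product over the $n$ sections. The Durfee square contributes $q^{D^2}=q^{N^2}$ with $N=N_n$. For each $j$, the free top partition $\alpha^j$ and bottom partition $\beta^j$ have parts in the allowed range, so the standard identity $\prod_i (1-zq^i)^{-1} = \sum_\lambda z^{\mathrm{len}(\lambda)}q^{|\lambda|}$ (summed over partitions $\lambda$ into the permitted part-sizes) produces exactly the Pochhammer pair $(x_1q;q)_{m_1}^{-1}(q/x_1;q)_{m_1}^{-1}$ for $j=1$ and $(x_jq^{N_{j-1}};q)_{m_j+1}^{-1}(q^{N_{j-1}}/x_j;q)_{m_j+1}^{-1}$ for $j\ge 2$, with $x_j$ tracking $\mathrm{len}(\alpha^j)$ and $x_j^{-1}$ tracking $\mathrm{len}(\beta^j)$. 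The crucial remaining ingredient is the \emph{forced} boundary part: for each $j<n$ the definition requires a top part of mark $j$ equal to $N_j$, contributing $q^{N_j}$ but no power of $x_j$. Summing these forced parts over $j=1,\dots,n-1$ yields the remaining numerator factor $q^{N_1+N_2+\cdots+N_{n-1}}$, and simultaneously accounts for the $-1$ in the definition of the $j$-th rank for $j<n$ (the forced part inflates $\mathrm{len}(\alpha^j)$ by one, which the $-1$ cancels), so that the net exponent of $x_j$ equals precisely $\mathrm{len}(\alpha^j)-\mathrm{len}(\beta^j)-1$. Assembling the factors and summing over all shape data $(m_1,\dots,m_n)$ then reproduces the multisum $R_n(\boldsymbol{x};q)$ of \eqref{rkorigdef}, and extracting the coefficient of $x_1^{m_1}\cdots x_n^{m_n}q^r$ gives $\mathcal{D}_n(m_1,\dots,m_n;r)$.

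I expect the main obstacle to be verifying that this parametrization is an exact bijection rather than heuristic bookkeeping: one must check that the defining conditions on an $n$-marked Durfee symbol correspond bijectively to the data of the side $D$, the boundaries $N_j$, and the free partitions, so that each symbol is counted exactly once with the correct monomial weight. In particular, the overlap of the allowed ranges $[N_{j-1},N_j]$ and $[N_j,N_{j+1}]$ at the shared value $N_j$, together with the identification of $N_j$ as the forced largest mark-$j$ part, must be handled carefully to avoid over- or under-counting, and the constraint $m_1>0$ must be traced back to the requirement that mark $1$ (since $1<n$ for $n\ge 2$) carries a forced part. The case $n=1$ is the degenerate base case with no internal boundaries, where the statement collapses to the classical Durfee-square decomposition underlying \eqref{rankgenfn}.
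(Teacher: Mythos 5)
This statement is not proved in the paper at all: it is quoted, with attribution, as Theorem 10 of Andrews \cite{Andrews}, so the only proof to compare yours against is Andrews' original one. Your proposal is correct and is essentially that argument: parametrizing symbols by the Durfee side $D=N_n$ and the forced largest top-row parts $N_1,\dots,N_{n-1}$ of marks $1,\dots,n-1$, generating the free top and bottom partitions with parts in $[1,N_1]$ (for $j=1$) and $[N_{j-1},N_j]$ (for $j\ge 2$) to produce the Pochhammer pairs with $x_j^{\pm 1}$ tracking lengths, and letting the forced parts supply the numerator $q^{N_1+\cdots+N_{n-1}}$ while cancelling the $-1$ in the $j$-th rank for $j<n$, is exactly how the multisum \eqref{rkorigdef} is read off from Andrews' definition, and you correctly identify the crux (the range constraints and the boundary overlaps) as what must be checked against that definition. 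One small imprecision: your phrase ``$N_j$ records the largest part carried by mark $j$'' is right only for $j<n$; for $j=n$ there is no forced part, $\alpha^n$ may be empty with parts merely lying in $[N_{n-1},D]$, and $N_n$ is the independently chosen Durfee side --- which is precisely why the $n$-th rank, unlike the others, carries no $-1$.
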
 
When $n=1$, one recovers Dyson's rank, that is, $\mathcal D_1(m_1;r)=N(m_1,r)$, so that \eqref{durfgenand1} reduces to \eqref{rankgenfn} in this case.  The mock modularity of the associated two variable generating function $R_1(x_1;q)$ was established in \cite{BO} as described in the Theorem above. When $n=2$, the modular properties of $R_2(1,1;q)$ were originally studied by Bringmann in \cite{Bri1}, who showed that 
\[R_2(1,1;q) := \frac{1}{(q;q)_\infty}\sum_{m\neq 0} \frac{(-1)^{m-1}q^{3m(m+1)/2}}{(1-q^m)^2}\]  
is a \emph{quasimock theta function}.   In \cite{BGM}, Bringmann, Garvan, and Mahlburg showed more generally that $R_{n}(1,1,\dots,1;q)$ is a quasimock theta function for $n\geq 2$. (See \cite{Bri1, BGM} for precise details of these statements.)  

In \cite{F-K}, two of the authors  established the automorphic properties of  $R_n\left({\boldsymbol{x}};q\right)$, for more arbitrary parameters ${\boldsymbol{x}} = (x_1,x_2,\dots,x_n)$, thereby treating families of $n$-marked Durfee rank functions with  additional singularities beyond those of $R_n(1,1,\dots,1;q)$.   We point out that the techniques of Andrews \cite{Andrews} and Bringmann \cite{Bri1} were not directly applicable in this setting  due to the presence of such additional singularities.   These singular combinatorial families are essentially mixed mock and quasimock modular forms. To precisely state a result from \cite{F-K} along these lines, we first introduce some notation, which we also use for the remainder of this paper. 
Namely, we consider functions evaluated at certain  length $n$ vectors ${\boldsymbol{\zeta_n}}$  of roots of unity defined as follows (as in \cite{F-K}).   

  In what follows, we let  $n$ be a fixed  integer satisfying $n\geq 2$.    Suppose for $1\leq j \leq n$, $\alpha_j \in \mathbb Z$ and $\beta_j \in  \mathbb N$, where $\beta_j \nmid \alpha_j, \beta_j \nmid 2\alpha_j$, and that $\frac{\alpha_{r}}{\beta_{r}} \pm \frac{\alpha_{s}}{\beta_{s}} \not\in\mathbb Z$ if $1\leq r\neq s \leq n$. Let
\begin{align} 
\notag
 {\boldsymbol{\alpha_n}}  &:= \Big( \frac{\alpha_{1}}{\beta_{1}},\frac{\alpha_{2}}{\beta_{2}},\dots,\frac{\alpha_{n}}{\beta_{n}} \Big) \in \mathbb Q^n \\ 
 \label{zetavec}
 {\boldsymbol{\zeta_n}}    &:=\big(\zeta_{\beta_{1}}^{\alpha_{1}},\zeta_{\beta_{2}}^{\alpha_{2}},\dots,\zeta_{\beta_{n}}^{\alpha_{n}}\big) \in \mathbb C^n.
 \end{align}  
 
 \begin{remark} We point out that the dependence of the vector $\boldsymbol{\zeta_n}$ on $n$ is reflected only in the length of the vector, and not (necessarily) in the roots of unity that comprise its components.  In particular, the vector  components may be chosen to be $m$-th roots of unity for different values of $m$.   
  \end{remark}  

\begin{remark}  The conditions stated above for $\boldsymbol{\zeta_n}$, as given in \cite{F-K},  do not require $\gcd(\alpha_j, \beta_j) = 1$.  Instead, they merely require that $\frac{\alpha_j}{\beta_j}   \not\in  \frac{1}{2}\Z$. Without loss of generality, we will assume here  that $\gcd(\alpha_j, \beta_j) = 1$.  Then, requiring that $\beta_j \nmid 2\alpha_j$ is the same as saying $\beta_j \neq 2$.    \end{remark}
  In \cite{F-K}, the authors proved that (under the hypotheses  for $\boldsymbol{\zeta_n}$   given above) the completed nonholomorphic function   
 \begin{equation}\label{Ahat}
 \widehat{\mathcal A}(\boldsymbol{\zeta_n};q) = q^{-\frac{1}{24}}R_n(\boldsymbol{\zeta_n};q) +  \mathcal A^-(\boldsymbol{\zeta_n};q)
 \end{equation} 
 transforms like a modular form. Here  the nonholomorphic part $\mathcal A^-$ is defined by 
 \begin{equation}\label{def_A-}
 \mathcal A^-(\boldsymbol{\zeta_n};q) := \frac{1}{\eta(\tau)}\sum_{j=1}^{n} (\zeta_{2\beta_j}^{-3\alpha_j}-\zeta_{2\beta_j}^{-\alpha_j})\frac{\mathscr{R}_3^-\left(\frac{\alpha_j}{\beta_j},-2\tau;\tau\right)}{\Pi_{j}^\dag({\boldsymbol{\alpha_n}})},
\end{equation}  
where $\mathscr{R}_3$ is defined in \eqref{AminusDef}, and  the constant $\Pi_j^{\dag}$ is  defined in \cite[(4.2), with  $n\mapsto j$ and $k\mapsto n$]{F-K}.  
Precisely, we have the following  special case of a theorem  established by two of the authors in \cite{F-K}.  
\begin{theoremno}[\cite{F-K} Theorem 1.1] If $n\geq 2$ is an integer,  then  $ \widehat{\mathcal A}\!\left( {\boldsymbol{\zeta_n}};q \right)$   is a nonholomorphic modular form of weight $1/2$ on $\Gamma_{n}$ with character $\chi_\gamma^{-1}$. 
\end{theoremno}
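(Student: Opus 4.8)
The plan is to reduce the $(n+1)$-fold multisum defining $R_n(\boldsymbol{\zeta_n};q)$ to a closed form that is manifestly $\eta(\tau)^{-1}$ times a finite linear combination of level-$3$ Appell--Lerch sums, and then to invoke the known modular completion of those sums. Concretely, I would carry out the $n$ inner summations one index at a time. Each summation over $m_j$ pairs against a shifted $q$-Pochhammer product $(x_jq^{\,m_1+\dots+m_{j-1}};q)_{m_j+1}(q^{\,m_1+\dots+m_{j-1}}/x_j;q)_{m_j+1}$ in the denominator, and upon the specialization $x_j=\zeta_{\beta_j}^{\alpha_j}$ these factors acquire genuine poles; the key device is a partial-fraction decomposition of each such factor in the variable $q^{m_j}$, whose residues are controlled precisely by the separation hypotheses $\beta_j\neq2$ and $\tfrac{\alpha_r}{\beta_r}\pm\tfrac{\alpha_s}{\beta_s}\notin\mathbb Z$. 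Summing the resulting geometric pieces telescopes the multisum, and after all $n$ inner sums are performed one is left with a single remaining summation carrying the characteristic exponent $q^{3m(m+1)/2}$ --- the signature of a level-$3$ object. This identifies $q^{-\frac{1}{24}}R_n(\boldsymbol{\zeta_n};q)$ as $\eta(\tau)^{-1}$ times a weighted sum over $1\le j\le n$ of the holomorphic level-$3$ sums $\mathscr{R}_3\!\left(\tfrac{\alpha_j}{\beta_j},-2\tau;\tau\right)$, with exactly the coefficients $(\zeta_{2\beta_j}^{-3\alpha_j}-\zeta_{2\beta_j}^{-\alpha_j})/\Pi_j^\dag(\boldsymbol{\alpha_n})$ recorded in \eqref{def_A-}.

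With this closed form established, the second step is to complete each $\mathscr{R}_3$ by its nonholomorphic Eichler-integral partner $\mathscr{R}_3^-$, the period integral of a weight-$\tfrac32$ unary theta function, so that $\widehat{\mathscr{R}}_3:=\mathscr{R}_3+\mathscr{R}_3^-$ obeys a Zwegers-type transformation. Adding $\mathcal A^-(\boldsymbol{\zeta_n};q)$ to $q^{-\frac{1}{24}}R_n(\boldsymbol{\zeta_n};q)$ then replaces each $\mathscr{R}_3$ by its completion, yielding
\[
\widehat{\mathcal A}(\boldsymbol{\zeta_n};q)=\frac{1}{\eta(\tau)}\sum_{j=1}^{n}(\zeta_{2\beta_j}^{-3\alpha_j}-\zeta_{2\beta_j}^{-\alpha_j})\frac{\widehat{\mathscr{R}}_3\!\left(\tfrac{\alpha_j}{\beta_j},-2\tau;\tau\right)}{\Pi_j^\dag(\boldsymbol{\alpha_n})}.
\]
Each completed summand is a product of the weight $-\tfrac12$ form $\eta^{-1}$ with the specialized sum $\widehat{\mathscr{R}}_3\!\left(\tfrac{\alpha_j}{\beta_j},-2\tau;\tau\right)$, whose elliptic variable has been tied to $\tau$ via $z=-2\tau$ while its torsion point $\alpha_j/\beta_j$ remains a fixed rational. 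Under this specialization the Jacobi automorphy factor of $\widehat{\mathscr{R}}_3$ collapses to an ordinary one, and its product with $\eta^{-1}$ acquires total weight $\tfrac12$; since no residual elliptic dependence survives, the finite sum is a single weight-$\tfrac12$ nonholomorphic modular form.

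It then remains to pin down the automorphy group and multiplier. The eta-multiplier, the theta-multiplier inherited from $\widehat{\mathscr{R}}_3$, and the various roots of unity combine into one character, and restricting to those $\gamma\in\textnormal{SL}_2(\mathbb Z)$ that simultaneously preserve each datum $\alpha_j/\beta_j$ modulo the relevant lattice forces passage to the congruence subgroup $\Gamma_n$ dictated by the $\lcm$ of the $\beta_j$ together with the fixed level-$3$ data; on $\Gamma_n$ all the multipliers coalesce into the stated character $\chi_\gamma^{-1}$. I expect the first step --- the reduction of the opening paragraph --- to be the main obstacle: because the $x_j$ are \emph{distinct} roots of unity, the Pochhammer denominators develop additional, interlocking poles absent from the case $x_1=\dots=x_n=1$ treated by Andrews and Bringmann, and it is exactly the careful residue bookkeeping (verifying that the relevant poles are simple, disentangling their contributions, and evaluating the constants $\Pi_j^\dag$) that makes the collapse to a clean $\eta^{-1}$-times-Appell--Lerch form both possible and delicate.
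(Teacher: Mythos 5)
Your proposal follows essentially the same route as the paper's source for this statement: your opening reduction (partial fractions exploiting the distinctness hypotheses, with the residue constants $\Pi_j^\dag$) is precisely \cite[Proposition 4.2]{F-K} as quoted in Section \ref{prelim}, expressing $R_n(\boldsymbol{\zeta_n};q)$ as $\tfrac{1}{(q)_\infty}$ times the combination $\sum_{j}(\zeta_{2\beta_j}^{-3\alpha_j}-\zeta_{2\beta_j}^{-\alpha_j})A_3\!\left(\tfrac{\alpha_j}{\beta_j},-2\tau;\tau\right)/\Pi_j^\dag(\boldsymbol{\alpha_n})$, and your second and third steps (Zwegers-style completion, then combining the eta and Jacobi-form transformations, with the elliptic transformation absorbing the specialization $v=-2\tau$ and the congruence conditions of $\Gamma_n$ fixing the torsion points) are exactly how $\widehat{\mathcal A}_n$ is assembled and shown modular there. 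One caution on notation only: what you call the ``holomorphic level-$3$ sums $\mathscr{R}_3$'' are the paper's Appell functions $A_3$, and your completion term $\mathscr{R}_3^-$ is the paper's nonholomorphic $\mathscr{R}_3$ defined in \eqref{AminusDef}, so your symbols are internally consistent but swapped relative to the text.
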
   Here, the subgroup $\Gamma_{n}\subseteq \textnormal{SL}_2(\mathbb Z)$ under which $\widehat{\mathcal A}({\boldsymbol{\zeta_n}};q)$ transforms is defined by
\begin{align} \label{def_Gammangroup}
	\Gamma_{n}:=\bigcap_{j=1}^{n} \Gamma_0\left(2\beta_j^2\right)\cap \Gamma_1(2\beta_j),
\end{align} 	
and the Nebentypus character $\chi_\gamma$ is given in Lemma \ref{ETtrans}.  

\subsection{Quantum modular forms}   
In this paper, we study the quantum modular properties of the $(n+1)$-variable rank generating function for $n$-marked Durfee symbols $R_n({\boldsymbol{x}};q)$.  Loosely speaking, a quantum modular form is similar to a mock modular form in that it exhibits a modular-like transformation with respect to the action of a suitable subgroup of $\textnormal{SL}_2(\mathbb Z)$; however, the domain of a quantum modular form is not the upper half-plan $\mathbb H$, but rather the set of rationals $\mathbb Q$ or an appropriate subset.    The formal definition of a quantum modular form was originally introduced by Zagier in \cite{Zqmf} and has been slightly modified to allow for half-integral weights, subgroups of $\operatorname{SL_2}(\mathbb{Z})$, etc.\ (see \cite{BFOR}).

\begin{defn} \label{qmf}
A weight $k \in \frac{1}{2} \mathbb{Z}$ quantum modular form is a complex-valued function $f$ on $\mathbb{Q}$, such that for all $\gamma = \sm abcd \in \operatorname{SL_2}(\mathbb{Z})$, the functions $h_\gamma: \mathbb{Q} \setminus \gamma^{-1}(i\infty) \rightarrow \mathbb{C}$ defined by  
\begin{equation*}
h_\gamma(x) := f(x)-\varepsilon^{-1}(\gamma) (cx+d)^{-k} f\left(\frac{ax+b}{cx+d}\right)
\end{equation*}
satisfy a ``suitable" property of continuity or analyticity in a subset of $\mathbb{R}$.  
\end{defn}

\begin{remark} The complex numbers $\varepsilon(\gamma)$, which satisfy $|\varepsilon(\gamma)|=1$, are such as those appearing in the theory of half-integral weight modular forms. 
\end{remark}
\begin{remark}
We may modify Definition \ref{qmf} appropriately to allow transformations on subgroups of $\operatorname{SL_2}(\mathbb{Z})$. We may also restrict the domains of the functions $h_\gamma$ to be suitable subsets of $\mathbb{Q}$. 
\end{remark}

The subject of quantum modular forms has been widely studied since the time of origin of the above definition.   For example, quantum  modular forms have been shown to be related to the diverse areas of Maass forms, Eichler integrals, partial theta functions, colored Jones polynomials, meromorphic Jacobi forms, and vertex algebras, among other things (see \cite{BFOR} and references therein).  In particular, the notion of a quantum modular form is now known to have direct connection to Ramanujan's original definition of a mock theta function.  Namely, in his last letter to Hardy, Ramanujan examined the asymptotic difference between mock theta and modular theta functions as $q$ tends towards roots of unity $\zeta$ radially within the unit disk (equivalently, as $\tau$ approaches  rational numbers vertically in the upper half plane, with $q=e^{2\pi i \tau}, \tau \in \mathbb H$), and we now know that these radial limit differences are equal to  special values of quantum modular forms at rational numbers (see  \cite{BFOR, BR, FOR}).  
  
\subsection{Results} \label{sec_results}
On one hand, exploring the quantum modular properties of  the rank generating function for $n$-marked Durfee symbols $R_n$ in (\ref{durfgenand1})   is a natural problem given that two of the authors have established automorphic properties of this function on $\mathbb H$ (see  \cite[Theorem 1.1]{F-K} above),   that $\mathbb Q$ is a natural boundary to $\mathbb H$,  and that there has been much progress made in understanding the relationship between quantum modular forms and mock modular forms recently \cite{BFOR}.   Moreover, given that $R_n$ is a vast generalization of the two variable rank generating function in \eqref{rankgenfn} - both a combinatorial $q$-hypergeometric series and a mock modular form - understanding its automorphic properties in general is of interest.  
On the other hand, there is no reason to a priori expect $R_n$ to converge on $\mathbb Q$, let alone exhibit quantum modular properties there.  Nevertheless, we establish quantum modular properties for the rank generating function for $n$-marked Durfee symbols $R_n$ in this paper. 

For the remainder of this paper, we use the notation $$\mathcal V_{n}(\tau) := \mathcal V({\boldsymbol{\zeta_n}};q),$$ where $\mathcal V$ may refer to any one of the functions
 $\widehat{\mathcal A},  \mathcal A^-,  R_n,$ etc.   Moreover, we will write
 \begin{equation}\label{rel_AR}
 \mathcal A_{n}(\tau)    = q^{-\frac{1}{24}} R_n(\boldsymbol{\zeta_n};q)
  \end{equation} 
   for the holomorphic part of $\widehat{\mathcal A}$;  from \cite[Theorem 1.1]{F-K} above, we have that this function is a mock modular form of weight $1/2$ with character $\chi_\gamma^{-1}$  (see Lemma \ref{Chi_gammaForm})   for the group $\Gamma_n$ defined in (\ref{def_Gammangroup}).  Here, we will show that $\mathcal{A}_n$ is also a quantum modular form, under the  action of a subgroup  $\Gamma_{\boldsymbol{\zeta_n}} \subseteq \Gamma_{n}$ defined in \eqref{eqn:GroupDefn},   with quantum set 
\begin{equation}\label{qSetDef} \qs := \left\{\frac{h}{k}\in \Q\; \middle\vert\; \begin{aligned} & \ h\in\Z, k\in\N, \gcd(h,k) = 1, \ \beta_j \nmid k\ \forall\ 1\le j\le n,\\&\left\vert \frac{\alpha_j}{\beta_j}k - \left[\frac{\alpha_j}{\beta_j}k\right]\right\vert > \frac{1}{6}\ \forall\ 1\le j\le n\end{aligned} \right\},\end{equation}
where $[x]$ is the closest integer to $x$.

\begin{remark}\label{rmk:closest_int} For $x \in \frac12 + \mathbb Z$,  different sources define $[x]$ to mean
either $x-\frac12$ or $x+\frac12$. The definition of $\qs$ involving $[ \cdot ]$ is well-defined for either of these conventions in the case of $x\in \frac12 + \mathbb Z,$ as $\vert x - [x]\vert = \frac{1}{2}$.\end{remark}

 To define the exact subgroup under which $\mathcal A_n$ transforms as a quantum automorphic object, we let 
\begin{equation}\label{def_ell}\ell = \ell(\bs{\zeta_n}):= \begin{cases} 6\left[\text{lcm}(\beta_1, \dots, \beta_{n})\right]^2 &\text{ if } 3 \nmid \beta_j \text{ for all } 1\leq j \leq n, \\ 2\left[\text{lcm}(\beta_1, \dots, \beta_{n})\right]^2 &\text{ if } 3 \mid \beta_j \text{ for some } 1\leq j \leq n,  \end{cases}\end{equation} 
and let $S_\ell:=\left(\begin{smallmatrix}1 & 0 \\ \ell & 1 \end{smallmatrix}\right)$, $T:=\left(\begin{smallmatrix}1 & 1 \\ 0 & 1 \end{smallmatrix}\right)$.  We then define the group generated by these two matrices as 
\begin{equation} \label{eqn:GroupDefn}
\qSubgroup:= \langle S_\ell, T \rangle.
\end{equation}  

We now state our first main result,   which proves that $\mathcal A_n(x),$ and hence $e(-\frac{x}{24})R_n(\boldsymbol{\zeta_n};e(x))$   is a quantum modular form on $Q_{\boldsymbol{\zeta_n}}$ with respect to $\Gamma_{\boldsymbol{\zeta_n}}$. Here and throughout we let $e(x):=e^{2\pi ix}$.  
\begin{theorem}\label{thm_main_N0} 
 Let $n \geq 2$.   For all $\gamma = \left(\begin{smallmatrix}a & b \\ c & d \end{smallmatrix}\right) \in \qSubgroup$, and $x\in \qs$,  \[H_{n,\gamma}(x) := \mathcal{A}_n(x) - \chi_\gamma (c x+ d)^{-\frac12}\mathcal{A}_n(\gamma x) \] 
is defined, and extends to an analytic function in $x$ on $\mathbb{R} - \{\frac{-c}{d}\}$. 
In particular, for the matrix $S_\ell$,  
\begin{multline}\notag
H_{n,S_\ell}(x) = \frac{\sqrt{3}}{2} \sum_{j=1}^{n}\frac{(\zeta_{2\beta_j}^{\alpha_j} - \zeta_{2\beta_j}^{3\alpha_j})}{\displaystyle\Pi^\dag_j( {\bs{\alpha_n}})}  \left[\sum_\pm \zeta_6^{\pm1}\int_{\frac{1}{\ell}}^{i\infty}\frac{g_{\pm\frac13+\frac12,-\frac{3\alpha_j}{\beta_j}+\frac12}(3\rho)}{\sqrt{-i(\rho+x)}}d\rho \right] \\
+\sum_{j=1}^{n}\frac{(\zeta_{2\beta_j}^{-3\alpha_j} - \zeta_{2\beta_j}^{-\alpha_j})}{\displaystyle\Pi^\dag_j( {\bs{\alpha_n}})} (\ell x+1)^{-\frac12}\zeta_{24}^{-\ell}\mathcal{E}_1\left(\frac{\alpha_j}{\beta_j},\ell;x\right),
\end{multline}
where  the weight $3/2$ theta functions $g_{a,b}$ are defined in \eqref{def_gab}, and $\mathcal E_1$ is defined in \eqref{def_mathcalE}. 
\end{theorem}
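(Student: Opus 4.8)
The plan is to derive the quantum transformation entirely from the nonholomorphic modularity of $\widehat{\mathcal A}$ in \cite[Theorem 1.1]{F-K}, so that the combinatorial series $R_n$ itself is never transformed. First I would check $\qSubgroup \subseteq \Gamma_{n}$: since $\beta_j \mid \lcm(\beta_1,\dots,\beta_n)$ forces $2\beta_j^2 \mid \ell$ for every $j$, and both $S_\ell$ and $T$ have diagonal entries $1$ and lower-left entry divisible by $2\beta_j^2$ (resp.\ $0$), both generators lie in each $\Gamma_0(2\beta_j^2)\cap\Gamma_1(2\beta_j)$, hence in $\Gamma_{n}$. Writing $\mathcal A_n = \widehat{\mathcal A}_n - \mathcal A_n^-$ (see \eqref{rel_AR}) and rewriting the weight $\tfrac12$, multiplier $\chi_\gamma^{-1}$ transformation of $\widehat{\mathcal A}_n$ as $\widehat{\mathcal A}_n(\tau) = \chi_\gamma(c\tau+d)^{-\frac12}\widehat{\mathcal A}_n(\gamma\tau)$ and separating holomorphic from nonholomorphic parts gives, for $\tau\in\bbH$ and $\gamma\in\qSubgroup$,
\begin{equation}\label{cocycle}
H_{n,\gamma}(\tau) = \chi_\gamma(c\tau+d)^{-\frac12}\mathcal A_n^-(\gamma\tau) - \mathcal A_n^-(\tau).
\end{equation}
All of the theorem is thus carried by the nonholomorphic Eichler integrals inside $\mathcal A_n^-$.

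Second, I would pass from $\bbH$ to $\Q$ by radial limits. For $x\in\qs$ the defining inequalities in \eqref{qSetDef} guarantee that no Pochhammer factor in \eqref{rkorigdef} vanishes and that $R_n(\bs{\zeta_n};e(x))$ converges, so $\mathcal A_n(\tau)\to\mathcal A_n(x)$ as $\tau\to x$ vertically; after verifying that $\qs$ is stable under $\qSubgroup$, the same holds at $\gamma x$, and the left side of \eqref{cocycle} tends to the $H_{n,\gamma}(x)$ of the theorem. On the right side, $\mathcal A_n^-$ is, by \eqref{def_A-}, a finite combination of the period integrals $\mathscr R_3^-(\tfrac{\alpha_j}{\beta_j},-2\tau;\tau)$, each built from integrals $\int_{-\overline\tau}^{i\infty} g(\rho)\,(-i(\rho+\tau))^{-1/2}\,d\rho$ of weight $\tfrac32$ theta functions $g=g_{a,b}$. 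These theta functions are cusp forms, so the integrand decays exponentially as $\rho\to i\infty$ and has only an integrable singularity at the base point; setting $\tau=x$ real therefore yields a convergent integral, and differentiating under the integral sign shows $x\mapsto\mathcal A_n^-(x)$ is analytic away from its base points. Hence both sides of \eqref{cocycle} extend, and the identity persists on $\R-\{\gamma^{-1}(i\infty)\}$.

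Third, for the generator $S_\ell$ I would evaluate the right side of \eqref{cocycle} in closed form. Inserting \eqref{def_A-}, the $\eta$-transformation, and a change of variables $\rho\mapsto S_\ell\rho$ in the Eichler integral reduces $\chi_{S_\ell}(\ell\tau+1)^{-\frac12}\mathcal A_n^-(S_\ell\tau)$ to an integral based at the cusp $S_\ell(i\infty)=\tfrac1\ell$ once the weight $\tfrac32$ theta functions $g_{a,b}$ are transformed under $S_\ell$. That theta transformation is not exact: the discrepancy between the transformed theta and the theta at the shifted characteristics is a finite Gauss-type sum, which is precisely the correction $\mathcal E_1(\tfrac{\alpha_j}{\beta_j},\ell;x)$ carrying the factor $(\ell x+1)^{-\frac12}\zeta_{24}^{-\ell}$, while the residual $-\mathcal A_n^-(\tau)$ combines with the bulk of the transformed integral into the single period integral $\int_{1/\ell}^{i\infty}$. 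The arithmetic of this theta transformation modulo $3$ (the $\zeta_6^{\pm1}$, the shift by $\pm\tfrac13$, and the case split in the definition \eqref{def_ell} of $\ell$) then reproduces the stated formula. For the remaining generator $T$, the $q$-periodicity of $R_n$ together with the $\eta$-multiplier gives $\chi_T=\zeta_{24}$ and $H_{n,T}\equiv0$; the cocycle relation $H_{n,\gamma_1\gamma_2}(x)=H_{n,\gamma_2}(x)+\chi_{\gamma_2}(c_2x+d_2)^{-\frac12}H_{n,\gamma_1}(\gamma_2 x)$ then propagates definedness and analyticity from $S_\ell$ and $T$ to all of $\qSubgroup$.

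The hard part will be the third step: transforming the weight $\tfrac32$ theta functions $g_{a,b}$ under $S_\ell$ while correctly tracking the branch of $(-i(\rho+\tau))^{-1/2}$ through the substitution, and isolating the finite discrepancy as the closed-form $\mathcal E_1$. In particular, the delicate point is verifying that the genuinely nonholomorphic (complementary-error-function) contributions cancel in \eqref{cocycle}, so that only the holomorphic period integral and the finite correction survive; once this cancellation and the bookkeeping of the roots of unity $\zeta_{2\beta_j}^{\pm\alpha_j}$, $\zeta_{2\beta_j}^{\pm3\alpha_j}$, $\zeta_6^{\pm1}$ and the constants $\Pi_j^\dag$ are in hand, the convergence and analyticity claims follow from routine estimates on the cusp-form integrals.
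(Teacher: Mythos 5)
Your first two steps coincide with the paper's own reduction: checking $\qSubgroup\subseteq\Gamma_{n}$, reducing to the generators $T$ and $S_\ell$ via the cocycle relation, disposing of $T$ using the integrality of the $q$-powers in \eqref{rkorigdef}, and converting the error to modularity for $S_\ell$ into a difference of nonholomorphic parts is exactly the paper's identity \eqref{eq:HviaA-} (your radial-limit discussion makes explicit a passage the paper treats via Abel's theorem in Section \ref{quantumSet}). The gap is in your third step, which is where the content of the theorem lies. First, $\mathcal A_n^-$ is \emph{not} ``a finite combination of period integrals $\int_{-\overline\tau}^{i\infty}$'': by \eqref{def_A-} and \eqref{AminusDef} it is built from Zwegers' $R$-function at arguments $\tfrac{3\alpha_j}{\beta_j}+(2-\delta)\tau$ with respect to the modular variable $3\tau$, i.e.\ with ``$a$-parameter'' $\tfrac{2-\delta}{3}\notin(-\tfrac12,\tfrac12)$ when $\delta=0$, so before any integral representation applies one must shift the argument using the quasi-periodicity of $R$ (Proposition \ref{Rtransform} (2)). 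That shift produces the elementary exponential terms $q^{-\frac{1}{24}}e\left(\tfrac{3\alpha_j}{2\beta_j}\right)$ visible in \eqref{eq:F-}, and it is precisely the $S_\ell$-cocycle of these elementary terms --- not any theta-transformation discrepancy --- that equals $\mathcal E_1$ of \eqref{def_mathcalE}. Your prediction that $\mathcal E_1$ arises as ``a finite Gauss-type sum'' measuring a failure of the $g_{a,b}$ to transform exactly is incorrect: the relevant $g_{a,b}$ transformation is exact up to a root-of-unity multiplier and automorphy factor (this is \eqref{eq:g_convert} in the paper), and no Gauss sum occurs anywhere in the proof. So the bookkeeping you describe cannot produce the stated formula.

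Second, the cancellation of the genuinely nonholomorphic (error-function) contributions, which you correctly flag as the delicate point, is not something that ``follows from routine estimates'': it is the entire content of the paper's Proposition \ref{prop_Habanalytic}. The mechanism there is Zwegers' modular inversion for $R$ (Proposition \ref{Rtransform} (5)), applied twice, which trades each $R$-value at $S_\ell\tau$ for an $R$-value at $\tau$ plus Mordell integrals $h$; the $R$-values then cancel identically in the combination $F_{\alpha,\beta}(\tau)-\zeta_{24}^{-\ell}(\ell\tau+1)^{-\frac12}F_{\alpha,\beta}(S_\ell\tau)$, and the surviving $h$'s are converted into the integrals $\int_{1/\ell}^{i\infty}$ by Theorem \ref{thm_Zh2}. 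Relatedly, your claim that each boundary value $x\mapsto\mathcal A_n^-(x)$ is separately analytic away from base points is unjustified (and is not what the theorem asserts): in any integral representation both the endpoint and the kernel singularity move with $x$, and only in the cocycle combination do these moving pieces collapse to a fixed-path integral, which is why analyticity holds for $H_{n,S_\ell}$ but not for the individual summands. Without the $R$-to-$h$ cancellation argument (or a correct substitute, e.g.\ Zwegers' Theorem 1.16 (1) together with the elementary-term bookkeeping above), the proposal does not prove the theorem.
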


 \begin{remark} As mentioned above, the   constants  $\Pi^\dagger_j$ are defined in \cite[(4.2)]{F-K}.  With the exception of replacing $n\mapsto j$ and $k\mapsto n$, we have preserved the notation for these constants from \cite{F-K}.   
\end{remark}  

 \begin{remark}
Our results apply to any $n\geq 2$, as the quantum modular properties in the case $n=1$ readily follow from existing results.  Namely, proceeding as in the proof of Theorem \ref{qsProof}, one may determine a suitable  quantum set for the normalized rank generating function in \cite[Theorem 1.1]{BO}.   Using \cite[Theorem 1.1]{BO}, a short calculation shows that the error to modularity (with respect to the nontrivial generator of $\Gamma_c$)   is a multiple of $$\int_{x}^{i\infty} \frac{\Theta(\frac{a}{c};\ell_c \rho)}{\sqrt{-i(\tau+\rho)}}d\rho$$ for some $x\in\mathbb Q$.  When viewed as a function of $\tau$ in a subset of $\mathbb R$,  this integral is analytic (e.g., see \cite{LZ, Zqmf}).  

One could also establish the quantum properties of  a non-normalized version of   $R_1$ by  rewriting it in terms of the Appell-Lerch sum $A_3$, and proceeding as in the proof of Theorem \ref{thm_main_N0}.  In this case, 
 $R_1(\zeta_1;q)$ (where $\zeta_1=e(\alpha_1/\beta_1)$) converges on the quantum set $Q_{\zeta_1}$, where this set is defined by  letting $n=1$ in \eqref{qSetDef}.  
 
The interested reader may also wish to consult \cite{CLR} for general results on quantum properties associated to mock modular forms.  
 \end{remark}  

\begin{remark}
In a forthcoming joint work \cite{FJKS}, we extend Theorem \ref{thm_main_N0} to hold for the more general vectors of roots of unity considered in \cite{F-K}, i.e., those with repeated entries.  Allowing repeated roots of unity introduces additional singularities, and the modular completion of $R_n$ is significantly more complicated.  This precludes us from proving the more general case in the same way as the restricted case we address here.
\end{remark}

\section{Preliminaries}\label{prelim}  
\subsection{Modular, mock modular and Jacobi forms}
A  special ordinary modular form we require   is Dedekind's $\eta$-function, defined in (\ref{def_eta}).  This function  is well known to satisfy the following transformation law \cite{Rad}.  \begin{lemma}\label{ETtrans}
 For
$\gamma=\sm{a}{b}{c}{d} \in \textnormal{SL}_2(\mathbb Z)$, we have that
\begin{align*}
\eta\left(\gamma\tau\right)  = \chi_\gamma(c\tau + d)^{\frac{1}{2}} \eta(\tau), 
\end{align*}
where
$$\chi_\gamma := \begin{cases} e\left(\frac{b}{24}\right), & \textnormal{ if } c=0, d=1,  \\
\sqrt{-i} \ \omega_{d,c}^{-1}e\left(\frac{a+d}{24c}\right), & \textnormal{ if } c>0,\end{cases}$$ with $\omega_{d,c} := e(\frac12 s(d,c))$. Here the Dedekind sum $s(m,t)$ is given for integers $m$ and $t$    
by
$$s(m,t) := \sum_{j \!\!\!\mod t} \left(\!\!\left(\frac{j}{t}\right)\!\!\right)\left(\!\!\left(\frac{mj}{t}\right)\!\!\right),$$ 
where $((x)) := x-\lfloor x \rfloor - 1/2$ if $x\in \mathbb R\setminus \mathbb Z$, and $((x)):=0$ if $x\in \mathbb Z$.
 \end{lemma}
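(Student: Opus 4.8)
The plan is to use the logarithmic derivative of $\eta$ together with the quasimodular transformation of the weight $2$ Eisenstein series $E_2(\tau):=1-24\sum_{N\geq 1}\sigma_1(N)q^N$, which lets me treat every $\gamma$ with $c>0$ uniformly and isolates the Dedekind sum as a single constant of integration. First I dispose of the case $c=0,\ d=1$: since $q$ is invariant under $\tau\mapsto\tau+1$, only the prefactor $q^{1/24}$ in (\ref{def_eta}) changes, giving $\eta(\tau+1)=e(\tfrac{1}{24})\eta(\tau)$, in agreement with $\chi_\gamma=e(\tfrac{b}{24})$ on the translations $\gamma=\sm 1b01$.

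For $c>0$, I would start from the identity $\frac{d}{d\tau}\log\eta(\tau)=\frac{\pi i}{12}E_2(\tau)$, which follows by logarithmically differentiating (\ref{def_eta}) and summing $\sum_n \frac{nq^n}{1-q^n}=\sum_N\sigma_1(N)q^N$. Granting the (independently established) quasimodularity $E_2(\gamma\tau)=(c\tau+d)^2E_2(\tau)-\frac{6ic(c\tau+d)}{\pi}$, I consider
\[
F_\gamma(\tau):=\log\eta(\gamma\tau)-\log\eta(\tau)-\tfrac12\log(c\tau+d).
\]
A direct chain-rule computation, using $\frac{d}{d\tau}(\gamma\tau)=(c\tau+d)^{-2}$, gives $\frac{d}{d\tau}\log\eta(\gamma\tau)=\frac{\pi i}{12}E_2(\tau)+\frac{c}{2(c\tau+d)}$, so that $F_\gamma'(\tau)=0$. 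Hence $F_\gamma$ is constant, which already yields the shape $\eta(\gamma\tau)=\chi_\gamma(c\tau+d)^{1/2}\eta(\tau)$ with $\chi_\gamma=\exp(F_\gamma)$ a constant of modulus one. As a sanity check, for $\gamma=S=\sm 0{-1}10$, evaluating at the fixed point $\tau=i$ (where $S\cdot i=i$) gives $F_S\equiv-\tfrac12\log i=-\tfrac{\pi i}{4}=\log\sqrt{-i}$, reproducing $\eta(-1/\tau)=\sqrt{-i\tau}\,\eta(\tau)$.

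It remains to evaluate the constant $\chi_\gamma$ for general $\gamma$, and this is the \emph{main obstacle}, since there is no convenient fixed point as there was for $S$. Here one must show
\[
\chi_\gamma=\exp\!\Big(\!-\tfrac{\pi i}{4}+\pi i\big(\tfrac{a+d}{12c}-s(d,c)\big)\!\Big)=\sqrt{-i}\,\omega_{d,c}^{-1}e\Big(\tfrac{a+d}{24c}\Big),
\]
so that the entire nonelementary content collapses into the Dedekind sum $s(d,c)$. I would identify this constant by one of two standard routes: either (i) a contour-integral evaluation in the style of Siegel and Iseki, computing the limiting behavior of $F_\gamma(\tau)$ and extracting $s(d,c)$ from the resulting finite sum of sawtooth terms $((\cdot))$; or (ii) reduction of $\gamma$ to a word in $S$ and $T$ via continued fractions, accumulating one factor $e(\tfrac{1}{24})$ per $T$ and one factor $\sqrt{-i\tau}$ per $S$, and then invoking Dedekind's reciprocity law to recognize the accumulated phase as $\frac{a+d}{12c}-s(d,c)$. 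In either approach the delicate point is the consistent choice of branch for $\log(c\tau+d)$ (equivalently, verifying that $\sqrt{-i(c\tau+d)}=\sqrt{-i}\,\sqrt{c\tau+d}$ on the relevant region), which is exactly what forces the factor $\sqrt{-i}$ and the Dedekind-sum normalization $\omega_{d,c}$ in the stated multiplier; the formula then matches \cite{Rad}.
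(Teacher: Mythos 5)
Note first that the paper does not actually prove Lemma \ref{ETtrans}: it is quoted as a classical result with a citation to Rademacher \cite{Rad}, so your attempt must be measured against the standard literature proofs rather than an in-paper argument. Your $E_2$-reduction is correct as far as it goes. The computation $\frac{d}{d\tau}\log\eta(\gamma\tau)=\frac{\pi i}{12}E_2(\tau)+\frac{c}{2(c\tau+d)}$ is right, so $F_\gamma'\equiv 0$ and the transformation holds with \emph{some} constant $\chi_\gamma$; the $c=0$ case is complete, the fixed-point evaluation $\chi_S=e^{-\pi i/4}=\sqrt{-i}$ is a valid check, and your branch worry is handled correctly since $c>0$ puts $c\tau+d\in\bbH$, where the principal branches of $\sqrt{-i(c\tau+d)}$ and $\sqrt{-i}\sqrt{c\tau+d}$ agree. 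One caution: the quasimodularity of $E_2$ must genuinely be established independently (e.g.\ via Hecke's trick applied to $E_2(\tau)-\frac{3}{\pi\operatorname{Im}\tau}$), since several textbook derivations of it pass through the eta transformation itself, which would make the argument circular; you flag this but should pin down the reference.

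The genuine gap is the step you yourself call the main obstacle: the identification $\chi_\gamma=\sqrt{-i}\,\omega_{d,c}^{-1}e\left(\frac{a+d}{24c}\right)$ for general $\gamma$ with $c>0$. All the arithmetic content of the lemma --- the appearance of the Dedekind sum $s(d,c)$ --- lives in exactly this constant, and the proposal names two strategies without executing either. Route (ii) is subtler than your description suggests: accumulating one factor $e(\frac{1}{24})$ per $T$ and one $\sqrt{-i\tau}$ per $S$ along a continued-fraction word does not directly yield the closed form; one must prove composition/consistency identities for the multiplier system, and the Dedekind-sum identities required at each step (the behavior of $s(d,c)$ under $\gamma\mapsto\gamma T$ and $\gamma\mapsto\gamma S$) are essentially equivalent in strength to Dedekind reciprocity --- so ``invoking reciprocity'' is the whole proof compressed into one line, not a finishing touch. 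Route (i), the Siegel--Iseki contour evaluation, is likewise a full theorem rather than a remark. As written, then, you have a correct and efficient skeleton (existence of the multiplier, plus the easy cases) with the heart of the lemma outsourced; to count as a proof it must carry out one of the two routes in detail, or --- matching what the paper itself does --- simply cite \cite{Rad}, where precisely this constant evaluation is performed.
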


The following gives a useful expression for $\chi_\gamma$ (see \cite[Ch. 4, Thm. 2]{Knopp}):
\begin{equation}\label{Chi_gammaForm}
\chi_\gamma =
\left\{ \begin{array}{ll}
\big(\frac{d}{|c|} \big)e\left(\frac{1}{24}\left( (a+d)c - bd(c^2-1) - 3c \right)\right)
& \mbox{ if } c \equiv 1 \pmod{2}, \\				
\big( \frac{c}{d} \big) e\left(\frac{1}{24}\left( (a+d)c - bd(c^2-1) + 3d - 3 - 3cd \right)\right)
& \mbox{ if } d\equiv 1\pmod{2},
\end{array}\right.
\end{equation}
where $\big(\frac{\alpha}{\beta}\big)$ is the generalized Legendre symbol.
 
We require two additional  functions, namely the Jacobi theta function $\vartheta(u;\tau)$, an ordinary Jacobi form, and a nonholomorphic  modular-like function $R(u;\tau)$ used by Zwegers in \cite{Zwegers1}.    In what follows, we will also need certain transformation properties of these functions.

\begin{proposition} \label{thetaTransform} For $u \in\C$ and $\tau\in\mathbb{H}$, define
\begin{equation}\label{thetaDef}\vartheta(u;\tau) := \sum_{\nu\in\frac{1}{2} + \Z} e^{\pi i \nu^2\tau + 2\pi i \nu\left(u + \frac{1}{2}\right)}.\end{equation}
Then $\vartheta$ satisfies
\begin{enumerate}
	\item $\vartheta(u+1; \tau) = -\vartheta(u; \tau),$\\
	\item $\vartheta(u + \tau; \tau) = -e^{-\pi i \tau - 2\pi i u}\vartheta(u; \tau),$\\
	\item $\displaystyle \vartheta(u; \tau) = - i e^{\pi i \tau/4}e^{-\pi i u} \prod_{m=1}^\infty (1-e^{2\pi i m\tau})(1-e^{2\pi i u}e^{2\pi i \tau(m-1)})(1 - e^{-2\pi i u}e^{2\pi i m\tau}).$
\end{enumerate}
\end{proposition}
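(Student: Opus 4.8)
The plan is to handle the three identities by separate but related arguments: (1) and (2) follow from elementary manipulations of the defining series \eqref{thetaDef}, while (3)---which is the Jacobi triple product---I would deduce from (1) and (2) together with a Liouville-type uniqueness argument.

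For (1), I would substitute $u \mapsto u+1$ in \eqref{thetaDef}. Each summand acquires a factor $e^{2\pi i\nu}$, and since $\nu\in\frac12+\Z$ we have $e^{2\pi i\nu}=e^{\pi i}=-1$ independently of $\nu$. Pulling this common factor out of the sum immediately gives $\vartheta(u+1;\tau)=-\vartheta(u;\tau)$. For (2), I would substitute $u\mapsto u+\tau$ and complete the square in the exponent, using $\pi i\nu^2\tau+2\pi i\nu\tau=\pi i\tau(\nu+1)^2-\pi i\tau$. After the shift of summation variable $\mu=\nu+1$ (which again ranges over $\frac12+\Z$), the sum collapses back to $\vartheta(u;\tau)$, and the collected prefactor is $e^{-\pi i\tau}e^{-2\pi i(u+\frac12)}=-e^{-\pi i\tau-2\pi i u}$, which is exactly the asserted multiplier.

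For (3), I would write $P(u;\tau)$ for the right-hand product, including its prefactor $-ie^{\pi i\tau/4}e^{-\pi i u}$, and verify by the same two manipulations (an index shift of the factors, respectively a regrouping after $u\mapsto u+\tau$) that $P$ obeys precisely the transformation laws (1) and (2). The product converges locally uniformly for $\tau\in\mathbb H$, so $P$ is entire in $u$, with simple zeros exactly at the lattice points $u\in\Z+\Z\tau$, which are also the zeros of $\vartheta$. Hence the quotient $\vartheta(u;\tau)/P(u;\tau)$ is holomorphic in $u$, and by the matching multipliers it is invariant under $u\mapsto u+1$ and $u\mapsto u+\tau$; being a bounded entire doubly periodic function, it is constant in $u$ by Liouville's theorem. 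The genuine obstacle is then to show this constant equals $1$: I would pin it down by comparing a single Fourier coefficient, equivalently by extracting the leading behavior as $\operatorname{Im}\tau\to\infty$, where both $\vartheta$ and $P$ reduce to finitely many explicit terms and force the normalization. Since the triple product is classical, an expedient alternative is to cite it directly and merely record the reindexing (with $z=e^{2\pi i u}$ and nome $e^{\pi i\tau}$) that identifies \eqref{thetaDef} with the standard form $\sum_n z^n q^{n^2}$.
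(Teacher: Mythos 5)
The paper never proves this proposition: it is stated in the preliminaries as a list of classical facts about Jacobi's theta function (they appear, also without proof, as Proposition 1.2 of Zwegers's thesis \cite{Zwegers1}), so the comparison here is against the classical arguments rather than anything in the paper. Your proofs of (1) and (2) are correct and complete, and are exactly the standard ones: $e^{2\pi i\nu}=-1$ for $\nu\in\frac12+\Z$ gives (1), and your completion of the square together with the shift $\mu=\nu+1$ produces precisely the multiplier $-e^{-\pi i\tau-2\pi iu}$ in (2).

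Your primary route to (3), however, has a genuine gap at the normalization step. Liouville's theorem gives constancy in $u$ only, so what it yields is $\vartheta(u;\tau)=c(\tau)P(u;\tau)$ for some holomorphic function $c(\tau)$, and the two finishing moves you call ``equivalent'' are in fact different, and neither closes the argument as stated. Letting $\operatorname{Im}\tau\to\infty$ shows only that $c(\tau)\to1$, which does not force $c\equiv 1$ at a fixed $\tau$. Comparing a single Fourier coefficient in $u$ is sharper, but it does not reduce to ``finitely many explicit terms'': each Fourier coefficient of $\vartheta$ is a single monomial (the coefficient of $e^{-\pi iu}$ is $-ie^{\pi i\tau/4}$), while the matching coefficient of $P$ is an infinite $q$-series, and with $q=e^{2\pi i\tau}$ the comparison amounts to the identity
\begin{equation*}
(q;q)_\infty\sum_{k\geq 0}\frac{q^{k^2}}{(q;q)_k^2}=1,
\end{equation*}
i.e.\ precisely the Durfee-square identity \eqref{r1mock1}, an infinite identity that needs its own (combinatorial) proof. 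So this route trades the normalization for another nontrivial theorem rather than settling it. The standard repairs are either to invoke that identity explicitly, or to use Gauss's device: evaluating the quotient at the torsion points $u=\frac12,\ \frac\tau2,\ \frac{1+\tau}{2}$ yields the functional equation $c(\tau)=c(4\tau)$, whence $c(\tau)=c(4^n\tau)\to1$. Two smaller points: the direction of the zero comparison your quotient argument actually needs is that every zero of $P$ is a zero of $\vartheta$, which should be recorded explicitly (it follows from $\vartheta(0;\tau)=0$, by pairing $\nu\leftrightarrow-\nu$, together with (1) and (2)); and your fallback of simply citing the classical triple product with the appropriate reindexing is perfectly legitimate---indeed it is in effect what the paper does.
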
  
The nonholomorphic function $R(u;\tau)$   is defined in \cite{Zwegers1} by
\[
R(u;\tau):=\sum_{\nu\in\frac12+\Z} \left\{\operatorname{sgn}(\nu)-E\left(\left(\nu+\frac{\operatorname{Im}(u)}{\operatorname{Im}(\tau)}\right)\sqrt{2\operatorname{Im}(\tau)}\right)\right\}(-1)^{\nu-\frac12}e^{-\pi i\nu^2\tau-2\pi i\nu u},
\]
where
\[
E(z):=2\int_0^ze^{-\pi t^2}dt.
\]
 
 The function $R$ transforms like a (nonholomorphic) mock Jacobi form as follows.  
\begin{proposition}[Propositions 1.9 and 1.10, \cite{Zwegers1}]\label{Rtransform} The function $R$ satsifies the following   transformation properties:
\begin{enumerate}
	\item $R(u+1;\tau) = -R(u; \tau),$\\
	\item $R(u; \tau) + e^{-2\pi i u - \pi i\tau}R(u+\tau; \tau) = 2e^{-\pi i u - \pi i \tau/4}$,\\
	\item  $R(u;\tau) = R(-u;\tau)$,   \\ 
        \item $R(u;\tau+1)=e^{-\frac{\pi i}{4}} R(u;\tau)$,\\
        \item $\frac{1}{\sqrt{-i\tau}} e^{\pi i u^2/\tau} R\left(\frac{u}{\tau};-\frac{1}{\tau}\right)+R(u;\tau)=h(u;\tau),$
where the Mordell integral is defined by \begin{align}\label{def_hmordell} h(u;\tau):=\int_\R \frac{e^{\pi i\tau t^2-2\pi ut}}{\cosh \pi t} dt. \end{align} 
 
\end{enumerate}
\end{proposition}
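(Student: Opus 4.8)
The plan is to treat the four ``elementary'' identities (1)--(4) by direct manipulation of the defining series for $R(u;\tau)$ over $\nu\in\frac12+\Z$, and to reserve the modular inversion (5) for a separate, more substantial argument. For (1) I would substitute $u\mapsto u+1$: since $\operatorname{Im}(u+1)=\operatorname{Im}(u)$ the argument of $E$ is unchanged, while each exponential acquires the factor $e^{-2\pi i\nu}=-1$ (as $\nu\in\frac12+\Z$), so every summand flips sign and $R(u+1;\tau)=-R(u;\tau)$. For (3) I would reindex $\nu\mapsto-\nu$: using that both $\operatorname{sgn}$ and $E$ are odd, the brace $\{\operatorname{sgn}(\nu)-E(\cdots)\}$ picks up a global sign, and the two sign changes coming from this brace and from $(-1)^{\nu-\frac12}$ cancel, so that the reindexed series is exactly that of $R(u;\tau)$, giving $R(-u;\tau)=R(u;\tau)$. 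For (4) I would replace $\tau\mapsto\tau+1$; here $\operatorname{Im}(\tau+1)=\operatorname{Im}(\tau)$ leaves the $E$-term fixed, and the Gaussian gains the constant factor $e^{-\pi i\nu^2}=e^{-\pi i/4}$, valid for every $\nu\in\frac12+\Z$, which then pulls out of the sum to yield (4).

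Identity (2), which couples $u$ and $\tau$, is slightly more delicate. I would reindex the shifted series $R(u+\tau;\tau)$ by $\nu\mapsto\nu-1$ so that, after absorbing the prefactor $e^{-2\pi iu-\pi i\tau}$, both its exponential $e^{-\pi i\nu^2\tau-2\pi i\nu u}$ and its $E$-argument coincide with those appearing in $R(u;\tau)$. The decisive point is that the two $E$-terms then cancel in the combination $R(u;\tau)+e^{-2\pi iu-\pi i\tau}R(u+\tau;\tau)$, leaving $\sum_{\nu}[\operatorname{sgn}(\nu)-\operatorname{sgn}(\nu-1)](-1)^{\nu-\frac12}e^{-\pi i\nu^2\tau-2\pi i\nu u}$; since $\operatorname{sgn}(\nu)-\operatorname{sgn}(\nu-1)$ vanishes except at $\nu=\frac12$ (where it equals $2$), the sum collapses to the single term $2e^{-\pi iu-\pi i\tau/4}$, as required.

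The hard part is (5), the transformation under $\tau\mapsto-1/\tau$. My plan is to realize $R(u;\tau)$ as a non-holomorphic period (``Eichler'') integral of the weight-$3/2$ unary theta function $g_{a,b}$ from \eqref{def_gab}, by writing the building block $\operatorname{sgn}(\nu)-E\!\left((\nu+\frac{\operatorname{Im}(u)}{\operatorname{Im}(\tau)})\sqrt{2\operatorname{Im}(\tau)}\right)$ as a Gaussian integral in an auxiliary variable and interchanging the sum over $\nu$ with that integral. Applying the modular $S$-transformation to the resulting theta function, together with the change of variables $(u,\tau)\mapsto(u/\tau,-1/\tau)$, should reproduce the term $\frac{1}{\sqrt{-i\tau}}e^{\pi iu^2/\tau}R(u/\tau;-1/\tau)$ through the Gaussian prefactor, while the discrepancy between the two integration contours is precisely the Mordell integral $h(u;\tau)$ of \eqref{def_hmordell}. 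I expect the genuine obstacles here to be justifying the interchange of summation and integration and the attendant contour deformation, and then matching the correction term with $h$ via the $S$-transformation of $h$ itself together with Poisson summation.

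Alternatively, one could sidestep the integral representation entirely and argue by uniqueness: verify that both sides of (5) satisfy the same elliptic functional equations in $u$ and agree at a suitable base point, which then forces equality. Either route isolates (5) as the one step where the modular (rather than merely elliptic) nature of $R$ is genuinely used, so I would expect essentially all of the technical effort to concentrate there, with (1)--(4) being routine series bookkeeping.
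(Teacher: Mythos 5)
The paper does not prove this proposition at all: it is imported verbatim from Zwegers' thesis (Propositions 1.9 and 1.10 of \cite{Zwegers1}), so the only proof to compare against is Zwegers' own. Your treatment of items (1)--(4) is correct and is exactly the routine verification found there: for $\nu\in\frac12+\Z$ one has $e^{-2\pi i\nu}=-1$ and $e^{-\pi i\nu^2}=e^{-\pi i/4}$, giving (1) and (4); oddness of $\operatorname{sgn}$ and of $E$ gives (3); and in (2) your final display is right, including the crucial hidden sign $(-1)^{(\nu-1)-\frac12}=-(-1)^{\nu-\frac12}$ from the reindexing, which is what makes the $E$-terms cancel in a \emph{sum} and leaves $\operatorname{sgn}(\nu)-\operatorname{sgn}(\nu-1)$, supported only at $\nu=\frac12$. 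You should add one sentence on absolute convergence (the Gaussian decay of $\operatorname{sgn}(\nu)-E(\cdot)$ dominates the growth of $e^{-\pi i\nu^2\tau-2\pi i\nu u}$), since that is what licenses all of these reindexings.

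For (5), your instinct that all the real work concentrates here is right, but both of your routes have a gap as sketched. Your ``alternative'' route is essentially Zwegers' actual proof, and it needs two ingredients you omit. First, the rigidity argument requires both sides to be \emph{holomorphic} (indeed entire) in $u$: the Mordell integral $h(u;\tau)$ clearly is, but $R$ is only real-analytic, so one must first show that $\partial_{\bar u}$ of the left-hand side vanishes, i.e.\ that the nonholomorphic parts of $\frac{1}{\sqrt{-i\tau}}e^{\pi iu^2/\tau}R\left(\frac{u}{\tau};-\frac{1}{\tau}\right)$ and of $R(u;\tau)$ cancel. This $\bar\partial$-cancellation, proved by Poisson summation (equivalently, the weight-$3/2$ transformation underlying Proposition \ref{prop_Zg}(4)), is precisely where modularity enters; without it the uniqueness argument cannot begin, and indeed uniqueness is false for merely real-analytic functions --- note that $R$ itself satisfies the same functional equation $F(u)+e^{-2\pi iu-\pi i\tau}F(u+\tau)=2e^{-\pi iu-\pi i\tau/4}$ as $h$ does, by item (2). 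Second, ``agreement at a base point'' is not the mechanism: one shows the difference $f$ of the two sides satisfies the homogeneous equations $f(u+1)=-f(u)$ and $f(u+\tau)=-e^{\pi i\tau+2\pi iu}f(u)$, so that by Proposition \ref{thetaTransform} the product $f(u)\vartheta(u;\tau)$ is entire and doubly periodic, hence constant by Liouville; the constant is $0$ because $\vartheta(0;\tau)=0$, whence $f\equiv 0$. Your primary (Eichler-integral) route is also viable, but beware circularity: the period-integral representation of $R$ you want is the $R$-analogue of Theorem \ref{thm_Zh2} (Zwegers' Theorem 1.16(1)), so you must derive it directly from the incomplete-Gaussian identity as you propose rather than quote it; moreover the identity so obtained holds a priori only for $u=a\tau-b$ with $a,b\in\left(-\frac12,\frac12\right)$, so a final step extending (5) to all $u\in\C$ by real-analyticity (such points fill an open parallelogram) is needed. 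With these additions, either route goes through.
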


 Using the functions $\vartheta$ and $R$, Zwegers 
 defined   the 
nonholomorphic function  
 \begin{align}\label{AminusDef}
	\mathscr R_3 (u, v;\tau) :=& \frac{i}{2} \sum_{j=0}^{2} e^{2\pi i j u} \vartheta(v + j\tau + 1; 3\tau) R(3u - v - j\tau - 1; 3\tau)\\
	=& \frac{i}{2} \sum_{j=0}^{2} e^{2\pi i j u} \vartheta(v + j\tau; 3\tau) R(3u - v - j\tau; 3\tau),\nonumber
\end{align}
where the equality of the two expressions in \eqref{AminusDef} is justified by Proposition \ref{thetaTransform} and Proposition \ref{Rtransform}.   
 This function is used to complete the level three Appell function  (see \cite{Zwegers2} or \cite{BFOR}) 
\begin{align*} A_3(u, v; \tau) := e^{3\pi i u} \sum_{n\in\Z} \frac{(-1)^n q^{3n(n+1)/2}e^{2\pi i nv}}{1 - e^{2\pi i u}q^n},\end{align*} where $u,v \in \mathbb C$,
as  
\begin{align*}\label{def_A3hat} \widehat{A}_3(u, v; \tau) := A_3(u, v;\tau) + \mathscr R_3(u, v;\tau). \end{align*}

This completed  function transforms like a (non-holmorphic) Jacobi form, and in particular satisfies  the following elliptic transformation.  

\begin{theorem}[{\cite[Theorem 2.2]{Zwegers2}}]\label{completeAtransform} For $n_1, n_2, m_1, m_2\in\Z$, the completed level $3$ Appell function $\widehat{A}_3$ satisfies
\[\widehat{A}_3(u + n_1\tau + m_1, v + n_2\tau + m_2; \tau) = (-1)^{n_1 + m_1}e^{2\pi i (u(3n_1 - n_2) - vn_1)}q^{3n_1^2/2 - n_1n_2}\widehat{A}_3(u, v; \tau).\]
\end{theorem}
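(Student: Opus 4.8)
The plan is to establish the identity first for the four generators of the translation action, $u\mapsto u+1$, $v\mapsto v+1$, $v\mapsto v+\tau$, and $u\mapsto u+\tau$, and then to obtain the general case for arbitrary $n_1,n_2,m_1,m_2\in\Z$ by composing these. Since the proposed multiplier depends on $(u,v)$, composing a shift $g_2$ after $g_1$ requires evaluating the factor of $g_2$ at $g_1(u,v)$; one checks that $(-1)^{n_1+m_1}e^{2\pi i(u(3n_1-n_2)-vn_1)}q^{3n_1^2/2-n_1n_2}$ obeys the resulting cocycle relation, the quadratic $q$-power being exactly what makes this consistent. Throughout I would split $\widehat A_3=A_3+\mathscr R_3$ and treat the holomorphic Appell part and its completion separately.

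The two integer shifts are immediate and do not involve the completion. For $u\mapsto u+1$ the prefactor $e^{3\pi iu}$ of $A_3$ picks up $e^{3\pi i}=-1$ while each denominator $1-e^{2\pi iu}q^n$ is unchanged, and in $\mathscr R_3$ the same sign comes from $R(\,\cdot+3;3\tau)=-R(\,\cdot;3\tau)$ (three applications of Proposition \ref{Rtransform}(1)). For $v\mapsto v+1$, $A_3$ is invariant and the two signs from $\vartheta(\,\cdot+1;3\tau)=-\vartheta$ (Proposition \ref{thetaTransform}(1)) and $R(\,\cdot-1;3\tau)=-R$ cancel in $\mathscr R_3$. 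These give the multipliers $-1$ and $1$ predicted by the formula.

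The crux, and the step I expect to be the main obstacle, is $v\mapsto v+\tau$, where $A_3$ alone fails to transform and the completion is essential. For $A_3$ I multiply each summand by $q^n$ and use $\frac{q^n}{1-e^{2\pi iu}q^n}=e^{-2\pi iu}\big(\frac{1}{1-e^{2\pi iu}q^n}-1\big)$; the ``$-1$'' collapses the sum to the theta series $\Theta(v):=\sum_{n\in\Z}(-1)^nq^{3n(n+1)/2}e^{2\pi inv}$, giving $A_3(u,v+\tau;\tau)=e^{-2\pi iu}A_3(u,v;\tau)-e^{\pi iu}\Theta(v)$. For $\mathscr R_3$ I reindex $j\mapsto j-1$ in \eqref{AminusDef}; the boundary term forces $\vartheta(v+3\tau;3\tau)$ and $R(3u-v-3\tau;3\tau)$, which I rewrite using Proposition \ref{thetaTransform}(2) and, crucially, the \emph{inhomogeneous} relation Proposition \ref{Rtransform}(2) with modulus $3\tau$. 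Its homogeneous part restores the full sum over $j\in\{0,1,2\}$ times $e^{-2\pi iu}$, while the inhomogeneous term produces a single theta contribution $-ie^{\pi iu-\pi iv-3\pi i\tau/4}\vartheta(v;3\tau)$. The two theta terms cancel precisely because of the elementary identity $\vartheta(v;3\tau)=ie^{\pi iv+3\pi i\tau/4}\Theta(v)$, read off from \eqref{thetaDef} (or from the product form Proposition \ref{thetaTransform}(3)). The delicate point is exactly this cancellation: tracking the $j$-reindexing, the quasi-periodicity factors of $\vartheta(\,\cdot;3\tau)$, and the exponential prefactors in $u,v$ together with the powers of $q$ so that the $A_3$-error and the $\mathscr R_3$-error agree term by term. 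Once this is confirmed, $\widehat A_3(u,v+\tau;\tau)=e^{-2\pi iu}\widehat A_3(u,v;\tau)$.

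Finally $u\mapsto u+\tau$ reduces \emph{exactly} to a $v$-shift, so no further cancellation is needed. A bilateral reindexing $n\mapsto n-1$ in the series for $A_3$, using $e^{3\pi i\tau}=q^{3/2}$, gives the term-by-term identity $A_3(u+\tau,v;\tau)=-e^{-2\pi iv}q^{3/2}A_3(u,v-3\tau;\tau)$ with no error; the same relation holds for $\mathscr R_3$, since under $u\mapsto u+\tau$ versus $v\mapsto v-3\tau$ the argument $3u-v-j\tau$ of $R(\,\cdot;3\tau)$ is unchanged and only the quasi-periodicity of $\vartheta(\,\cdot;3\tau)$ (Proposition \ref{thetaTransform}(2), with modulus $3\tau$) is needed to match the $\vartheta$-factors. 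Hence $\widehat A_3(u+\tau,v;\tau)=-e^{-2\pi iv}q^{3/2}\widehat A_3(u,v-3\tau;\tau)$; applying the rule for $v\mapsto v+\tau$ three times gives $\widehat A_3(u,v-3\tau;\tau)=e^{6\pi iu}\widehat A_3(u,v;\tau)$, which combines to the stated multiplier $-e^{2\pi i(3u-v)}q^{3/2}$. With all four generator cases in hand, the cocycle computation of the first paragraph yields the transformation for all $n_1,n_2,m_1,m_2\in\Z$.
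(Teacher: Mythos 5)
The paper never proves this statement: it is imported verbatim from Zwegers' preprint \cite{Zwegers2}, so there is no internal proof to compare yours against, and a self-contained argument is a genuine addition. Your proof is correct, and I checked the key computations. The two integer shifts work exactly as you say. For $v\mapsto v+\tau$, your identity $\frac{q^n}{1-e^{2\pi iu}q^n}=e^{-2\pi iu}\left(\frac{1}{1-e^{2\pi iu}q^n}-1\right)$ gives $A_3(u,v+\tau;\tau)=e^{-2\pi iu}A_3(u,v;\tau)-e^{\pi iu}\Theta(v)$ with your $\Theta(v):=\sum_{n\in\Z}(-1)^nq^{3n(n+1)/2}e^{2\pi inv}$, and the reindexing $j\mapsto j-1$ in \eqref{AminusDef}, combined with Proposition \ref{thetaTransform}(2) and the inhomogeneous relation Proposition \ref{Rtransform}(2) at modulus $3\tau$, produces the boundary contribution $-ie^{\pi iu-\pi iv-3\pi i\tau/4}\vartheta(v;3\tau)$ after the homogeneous part restores the full $j$-sum; setting $\nu=n+\frac12$ in \eqref{thetaDef} confirms $\vartheta(v;3\tau)=ie^{\pi iv+3\pi i\tau/4}\Theta(v)$, so the two error terms cancel exactly as you claim. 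For $u\mapsto u+\tau$, the relation $\mathscr R_3(u+\tau,v;\tau)=-e^{-2\pi iv}q^{3/2}\mathscr R_3(u,v-3\tau;\tau)$ does hold term by term, since $q^{j}\vartheta(v+j\tau;3\tau)=-e^{-2\pi iv}q^{3/2}\vartheta(v+(j-3)\tau;3\tau)$ with a $j$-independent factor, and the $R$-arguments coincide; combining with three inverse $v$-shifts yields the multiplier $-e^{2\pi i(3u-v)}q^{3/2}$, matching the theorem with $(n_1,n_2,m_1,m_2)=(1,0,0,0)$. Finally, the cocycle identity you defer to a check is true: writing $J(n_1,n_2,m_1,m_2;u,v)$ for the stated multiplier, the cross factor $q^{n_1(3n_1'-n_2')-n_2n_1'}$ that appears when $J(n_1',n_2',m_1',m_2';\cdot,\cdot)$ is evaluated at the shifted variables is precisely the difference between $3(n_1+n_1')^2/2-(n_1+n_1')(n_2+n_2')$ and the sum of the two individual quadratic exponents, while the factors $e^{2\pi i m_1(3n_1'-n_2')}$ and $e^{-2\pi im_2n_1'}$ are trivially $1$, so the generator cases really do propagate to all $n_1,n_2,m_1,m_2\in\Z$. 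In substance this is the same strategy as Zwegers' own (verify the elliptic transformation on generators of the lattice action in each variable and compose), but carrying it out with only the tools the paper actually quotes --- \eqref{AminusDef}, Proposition \ref{thetaTransform}, and Proposition \ref{Rtransform} --- makes the cited result verifiable without access to the unpublished preprint.
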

 
 The following relationship between the Appell series $A_3$ and the combinatorial series $R_n$ is proved in \cite{F-K}.
 \begin{props}[{\cite[Proposition 4.2]{F-K}}]  Under the hypotheses given above on $\boldsymbol{\zeta_n}$, we have that 
\[
  R_n(\boldsymbol{\zeta_n};q) =\frac{1}{(q)_\infty} \sum_{j=1}^n\left(\zeta_{2\beta_j}^{-3\alpha_j}-\zeta_{2\beta_j}^{-\alpha_j}\right)\frac{A_3\left(\frac{\alpha_j}{\beta_j},-2\tau;\tau\right)}{\Pi^\dag_j( {\boldsymbol{\alpha_n}})}.
\]
\end{props}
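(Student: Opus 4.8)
The plan is to prove the identity in two stages: first reduce the defining $(n{+}1)$-fold multisum for $R_n(\boldsymbol{x};q)$ to a single bilateral (Hecke-type) sum for generic parameters $\boldsymbol{x}=(x_1,\dots,x_n)$, and then specialize $\boldsymbol{x}=\boldsymbol{\zeta_n}$ and decompose by partial fractions. Since $\tau\in\bbH$ forces $|q|<1$, the multisum \eqref{rkorigdef} converges absolutely, so all of the manipulations below are legitimate. Concretely, the goal of the first stage is a representation
\[
R_n(\boldsymbol{x};q)=\frac{1}{(q)_\infty}\sum_{m\in\Z}(-1)^m q^{\frac{3m^2-m}{2}}\,\frac{P(\boldsymbol{x};q^m)}{\prod_{j=1}^{n}\bigl(1-x_jq^m\bigr)},
\]
where $P$ is a polynomial of degree $n-1$ in $q^m$; the exponent $\tfrac{3m^2-m}{2}$ is exactly the one produced by $A_3(\tfrac{\alpha_j}{\beta_j},-2\tau;\tau)$, and the reciprocal poles $x_j^{-1}q^m=1$ present in \eqref{rkorigdef} are recovered here from the range $m<0$ (compare the coincident case $x_1{=}x_2{=}1$, where this denominator degenerates to $(1-q^m)^2$).

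This single-sum reduction is the step I expect to be the main obstacle, since the Gaussian $q^{(m_1+\dots+m_n)^2}$ couples all $n$ summation variables. I would carry it out by summing one index at a time, beginning with the innermost index $m_n$: after writing each factor $(x_kq^{M_{k-1}};q)_{m_k+1}(q^{M_{k-1}}/x_k;q)_{m_k+1}$ (with $M_k:=m_1+\dots+m_k$) in telescoped form and isolating the Gaussian, each inner summation can be evaluated using the classical $n=1$ rank/Appell evaluation as a building block, which converts a one-sided sum into a bilateral one and produces a simple-pole denominator factor. The delicate point is tracking how the shifted arguments $q^{M_{k-1}}$ propagate through the $n$ successive evaluations, and how the accompanying linear terms $(m_1+\dots+m_{n-1})+\dots+m_1$ in the exponent of \eqref{rkorigdef} combine with the Gaussian; a shift of the final summation index normalizes the resulting quadratic to $\tfrac{3m^2-m}{2}$. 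An induction on $n$ to organize this bookkeeping, or a direct multidimensional Bailey-type manipulation, should yield the displayed formula.

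Granting the single sum, I would specialize $x_j=\zeta_{\beta_j}^{\alpha_j}$ and apply the partial-fraction identity
\[
\frac{1}{\prod_{j=1}^{n}(1-x_jy)}=\sum_{j=1}^{n}\frac{x_j^{\,n-1}}{\prod_{i\neq j}(x_j-x_i)}\cdot\frac{1}{1-x_jy}\qquad(y=q^m),
\]
which is valid precisely because the hypotheses on $\boldsymbol{\zeta_n}$ make the $x_j$ pairwise distinct with no reciprocal collisions: $\tfrac{\alpha_r}{\beta_r}-\tfrac{\alpha_s}{\beta_s}\notin\Z$ gives $x_r\neq x_s$, while $\tfrac{\alpha_r}{\beta_r}+\tfrac{\alpha_s}{\beta_s}\notin\Z$ rules out $x_rx_s=1$, so every pole $q^m=x_j^{-1}$ is simple and the poles are distinct. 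The condition $\beta_j\nmid\alpha_j$, i.e.\ $x_j\neq 1$, keeps the $m=0$ term finite. After interchanging the (absolutely convergent) $j$- and $m$-summations, the Proposition reduces to two checks: (i) the purely algebraic identity stating that the residue of $P(\boldsymbol{\zeta_n};\cdot)/\prod_i(1-x_i\,\cdot)$ at $q^m=x_j^{-1}$ equals $(1-x_j)/\Pi^\dag_j(\boldsymbol{\alpha_n})$, which is a finite computation matching the definition of $\Pi^\dag_j$ in \cite[(4.2)]{F-K}; and (ii) the single-variable identity recorded below.

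Finally, check (ii) follows immediately from the definition of $A_3$. Taking $u=\tfrac{\alpha_j}{\beta_j}$ and $v=-2\tau$ gives $e^{2\pi i u}=x_j$, $e^{3\pi i u}=\zeta_{2\beta_j}^{3\alpha_j}$, and $e^{2\pi i m v}=q^{-2m}$, whence $q^{3m(m+1)/2}e^{2\pi imv}=q^{(3m^2-m)/2}$ and
\[
A_3\!\left(\tfrac{\alpha_j}{\beta_j},-2\tau;\tau\right)=\zeta_{2\beta_j}^{3\alpha_j}\sum_{m\in\Z}\frac{(-1)^m q^{\frac{3m^2-m}{2}}}{1-x_jq^m}.
\]
Multiplying by the prefactor $(\zeta_{2\beta_j}^{-3\alpha_j}-\zeta_{2\beta_j}^{-\alpha_j})$ and using $\zeta_{2\beta_j}^{-3\alpha_j}(1-x_j)=\zeta_{2\beta_j}^{-3\alpha_j}-\zeta_{2\beta_j}^{-\alpha_j}$ converts each residue term $(1-x_j)/\Pi^\dag_j$ times the bilateral sum into the $j$-th summand on the right-hand side of the Proposition; here $\beta_j\nmid 2\alpha_j$ (i.e.\ $x_j\neq\pm1$) guarantees the prefactor is nonzero. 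Thus all the analytic difficulty is concentrated in the single-sum reduction, while the passage to the stated $A_3$-expansion is a partial-fraction computation made possible by the distinctness of the roots of unity.
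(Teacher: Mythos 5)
Your stage-2 ingredients are individually sound: the expansion of $A_3\bigl(\tfrac{\alpha_j}{\beta_j},-2\tau;\tau\bigr)$ straight from the definition, the prefactor algebra $\zeta_{2\beta_j}^{-3\alpha_j}(1-\zeta_{\beta_j}^{\alpha_j})=\zeta_{2\beta_j}^{-3\alpha_j}-\zeta_{2\beta_j}^{-\alpha_j}$, the partial-fraction identity you display, and the observation that the hypotheses on $\boldsymbol{\zeta_n}$ (no $x_r=x_s$, no $x_rx_s=1$, no $x_j=\pm1$) are exactly what keep every pole simple and distinct. But there is a genuine gap, and it sits precisely where you flag it: stage 1 is never carried out. ``Summing one index at a time using the classical $n=1$ evaluation as a building block'' does not go through as described, because the inner sums of \eqref{rkorigdef} are not instances of \eqref{rankgenfn}: for $k\geq 2$ the factors $(x_kq^{M_{k-1}};q)_{m_k+1}\bigl(q^{M_{k-1}}x_k^{-1};q\bigr)_{m_k+1}$ have shifted starting point $q^{M_{k-1}}$ and length $m_k+1$ rather than $m_k$, and you supply no mechanism for the one-sided-to-bilateral conversion you invoke at each of the $n$ successive steps. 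Your check (i) — that the residues of $P$ reproduce the constants $\Pi^\dag_j$ — is likewise only asserted, not performed. Since your stage-1 formula together with (i) is logically equivalent to the Proposition itself, nothing has actually been established; the entire content has been deferred to ``should yield the displayed formula.''

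For comparison: this paper does not prove the Proposition but quotes it from \cite{F-K}, where it is obtained in the \emph{opposite} order from your plan — and that reversal is what makes the argument tractable. Andrews \cite{Andrews} proved the partial-fraction decomposition
\[
R_n(x_1,\dots,x_n;q)=\sum_{j=1}^{n}\frac{R_1(x_j;q)}{\prod_{i\neq j}(x_j-x_i)\left(1-\frac{1}{x_ix_j}\right)},
\]
valid when $x_i\neq x_j^{\pm1}$, so $R_n$ is split into copies of $R_1$ at the level of the full functions, and one never needs a generic-$\boldsymbol{x}$ single bilateral sum with an explicit numerator $P$; each $R_1(x_j;q)$ is then converted into an $A_3$-value via the classical rank identity $R_1(x;q)=\frac{1-x}{(q)_\infty}\sum_{m\in\Z}\frac{(-1)^mq^{m(3m+1)/2}}{1-xq^m}$ (the identity underlying \cite{BO}), with the cross factors recorded by $\Pi^\dag_j(\boldsymbol{\alpha_n})$. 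If you do try to complete your version, beware one concrete trap: your remark that ``a shift of the final summation index normalizes the quadratic'' is not innocuous for bilateral sums, since by the pentagonal number theorem one has
\[
\sum_{m\in\Z}\frac{(-1)^mq^{m(3m-1)/2}}{1-xq^m}=x\sum_{m\in\Z}\frac{(-1)^mq^{m(3m+1)/2}}{1-xq^m}+(q)_\infty,
\]
so the exponents $m(3m\pm1)/2$ give genuinely \emph{different} bilateral sums, differing by $x$-weights and $(q)_\infty$-multiples; your residue computation (i) must absorb exactly such correction terms, and the sketch as written does not account for them. Finally, your $x_1=x_2=1$ consistency check is misleading: in the combined $n=2$ formula the numerator carries a factor $\bigl(1-\frac{1}{x_1x_2}\bigr)^{-1}$, so the coincident case is a degenerate limit rather than a substitution — which is precisely why $x_1=\cdots=x_n=1$ required the separate quasimock analysis of \cite{Bri1,BGM}.
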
 

We also note that
\begin{align*}
\widehat{\mathcal A}_n\left( \tau \right)  = \frac{1}{\eta(\tau)}\sum_{j=1}^{n} (\zeta_{2\beta_j}^{-3\alpha_j}-\zeta_{2\beta_j}^{-\alpha_j})\frac{\widehat{A}_3\left(\frac{\alpha_j}{\beta_j},-2\tau;\tau\right)}{\Pi_{j}^\dag({\boldsymbol{\alpha_n}})}. 
\end{align*} 
 
 In addition to working with the Appell sum $\widehat{A}_3$, we also make use of additional properties  of the functions $h$ and $R$.  In particular,  Zwegers also  showed how under certain hypotheses, these functions  can be written in terms of integrals involving the weight $3/2$ modular forms $g_{a,b}(\tau)$, defined for $a,b\in\mathbb R$ and $\tau \in \mathbb H$ by
 \begin{align}\label{def_gab}
 g_{a,b}(\tau) := \sum_{\nu \in a + \mathbb Z} \nu e^{\pi i \nu^2\tau + 2\pi i \nu b}.
 \end{align}
 We will make use of the following results. 
\begin{proposition}[{\cite[Proposition 1.15 (1), (2), (4), (5)]{Zwegers1}}]\label{prop_Zg}
 The function  $g_{a,b}$ satisfies:
\begin{enumerate}
\item[(1)] $g_{a+1,b}(\tau)= g_{a,b}(\tau)$, \\
\item[(2)] $g_{a,b+1}(\tau)= e^{2\pi ia} g_{a,b}(\tau)$,\\
\item[(3)] $g_{a,b}(\tau+1)= e^{-\pi ia(a+1)}g_{a,a+b+\frac12}(\tau)$,\\
\item[(4)] $g_{a,b}(-\frac{1}{\tau})= i e^{2\pi iab} (-i\tau)^{\frac32}g_{b,-a}(\tau)$.
\end{enumerate}
\end{proposition}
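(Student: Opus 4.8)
The plan is to verify the four identities directly from the series definition \eqref{def_gab}, treating (1)--(3) as elementary reindexings and (4) as the substantive modular transformation. Throughout I would use that $g_{a,b}(\tau)=\sum_{\nu\in a+\mathbb{Z}}\nu\,e^{\pi i\nu^2\tau+2\pi i\nu b}$ converges absolutely and locally uniformly on $\mathbb{H}$ (its terms decay like $\nu e^{-\pi\nu^2\,\mathrm{Im}(\tau)}$), which licenses all term-by-term manipulations and differentiation under the summation sign.

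For (1), the summation set $a+1+\mathbb{Z}$ equals $a+\mathbb{Z}$ and the summand is unchanged, so the two series coincide. For (2), replacing $b$ by $b+1$ multiplies the $\nu$-th term by $e^{2\pi i\nu}$; writing $\nu=a+m$ with $m\in\mathbb{Z}$ gives $e^{2\pi i\nu}=e^{2\pi ia}$, which factors out. For (3), replacing $\tau$ by $\tau+1$ multiplies the $\nu$-th term by $e^{\pi i\nu^2}$; using $\nu=a+m$ and $m(m-1)\in2\mathbb{Z}$ one checks that $e^{\pi i\nu^2}=e^{-\pi ia(a+1)}e^{2\pi i\nu a+\pi i\nu}=e^{-\pi ia(a+1)}e^{2\pi i\nu(a+\frac12)}$, and absorbing $e^{2\pi i\nu(a+\frac12)}$ into the $b$-exponent converts the series into $e^{-\pi ia(a+1)}g_{a,a+b+\frac12}(\tau)$.

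The heart of the argument is (4), which I would deduce from the classical $S$-transformation of the companion weight $1/2$ theta series $\theta_{a,b}(\tau):=\sum_{\nu\in a+\mathbb{Z}}e^{\pi i\nu^2\tau+2\pi i\nu b}$. First I would establish
\[
\theta_{a,b}(-1/\tau)=(-i\tau)^{1/2}e^{2\pi iab}\,\theta_{b,-a}(\tau)
\]
by Poisson summation: setting $\nu=a+m$, the Fourier transform of $x\mapsto e^{-\pi i(a+x)^2/\tau+2\pi ixb}$ is a Gaussian integral (convergent since $\mathrm{Re}(i/\tau)=\mathrm{Im}(\tau)/|\tau|^2>0$) that evaluates, after the substitution $y=a+x$, to $(-i\tau)^{1/2}$ times the dual Gaussian, and reindexing the Poisson dual sum produces $\theta_{b,-a}$. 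I would then pass from $\theta$ to $g$ via the two relations
\[
\partial_b\theta_{a,b}=2\pi i\,g_{a,b},\qquad \partial_a\theta_{a,b}=2\pi i\tau\,g_{a,b}+2\pi ib\,\theta_{a,b},
\]
both immediate from differentiating the series termwise. Differentiating the $\theta$ identity with respect to $b$, the left side yields $2\pi i\,g_{a,b}(-1/\tau)$; on the right side $b$ occupies the \emph{first} subscript of $\theta_{b,-a}$, so I invoke the second relation (with subscripts $b,-a$) to write $\partial_b\theta_{b,-a}=2\pi i\tau\,g_{b,-a}-2\pi ia\,\theta_{b,-a}$, after which the two $2\pi ia\,\theta_{b,-a}$ contributions cancel and I am left with
\[
g_{a,b}(-1/\tau)=\tau(-i\tau)^{1/2}e^{2\pi iab}\,g_{b,-a}(\tau).
\]
The scalar then simplifies through $\tau(-i\tau)^{1/2}=i(-i\tau)^{3/2}$ (since $i\cdot(-i)=1$), giving precisely (4).

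I expect the main obstacle to be the bookkeeping in (4): pinning down the Gaussian integral and the correct branch of $(-i\tau)^{1/2}$ in the Poisson step, and, more subtly, differentiating the transformed identity correctly when $b$ appears as the lattice-shifting first subscript on the right-hand side. The cancellation of the $\theta_{b,-a}$ terms is exactly what makes the $\partial_b$ route work cleanly, and it is the step I would check most carefully.
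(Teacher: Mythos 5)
Your proposal is correct in all four parts: (1) and (2) are the trivial reindexings you describe, your identity $e^{\pi i\nu^2}=e^{-\pi ia(a+1)}e^{2\pi i\nu(a+\frac12)}$ for $\nu\in a+\mathbb{Z}$ (using $m(m-1)\in 2\mathbb{Z}$) checks out for (3), and your derivation of (4) is sound, including the branch point that $\mathrm{Re}(i/\tau)>0$ for the Gaussian and the final simplification $\tau(-i\tau)^{1/2}=i(-i\tau)^{3/2}$. Note, however, that the paper itself offers no proof to compare against: the proposition is quoted verbatim from Zwegers' thesis (Proposition 1.15), so the ``paper's proof'' is a citation. Relative to Zwegers' own argument, your route to (4) is a genuine variant: the standard proof applies Poisson summation directly to the weight-$3/2$ series, using the Fourier transform of $x\mapsto xe^{-\pi Ax^2+2\pi iBx}$ (which produces the $(-i\tau)^{3/2}$ factor in one stroke), whereas you prove only the weight-$1/2$ inversion $\theta_{a,b}(-1/\tau)=(-i\tau)^{1/2}e^{2\pi iab}\theta_{b,-a}(\tau)$ by Poisson summation and then differentiate in $b$, exploiting the relations $\partial_b\theta_{a,b}=2\pi i\,g_{a,b}$ and $\partial_a\theta_{a,b}=2\pi i\tau\,g_{a,b}+2\pi ib\,\theta_{a,b}$. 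I verified the cancellation you flag: the product-rule term $2\pi ia\,e^{2\pi iab}\theta_{b,-a}$ exactly offsets the $-2\pi ia\,\theta_{b,-a}$ arising because $b$ sits in the lattice-shifting slot of $\theta_{b,-a}$. What your approach buys is that the only Fourier transform needed is the plain Gaussian; what it costs is the extra justification (which you supply) that the theta identity holds for all real $b$ and that locally uniform convergence licenses termwise differentiation of both sides. Either argument is acceptable; yours is complete as written.
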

 
  \begin{theorem}[{\cite[Theorem 1.16 (2)]{Zwegers1}}]\label{thm_Zh2}
 Let $\tau \in \mathbb H$. For $a,b \in (-\frac12,\frac12)$, we have
 $$h(a\tau-b;\tau) = -  e\left(\tfrac{a^2\tau}{2} - a(b+\tfrac12)\right) \int_{0}^{i\infty} \frac{g_{a+\frac12,b+\frac12}(\rho)}{\sqrt{-i(\rho+\tau)}}d\rho.$$ 
 \end{theorem}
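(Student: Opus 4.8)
\emph{Strategy.} My plan is to evaluate the right-hand integral in closed form by a Fubini argument that returns it to the Mordell integral \eqref{def_hmordell}. The key device is the Gaussian identity
\begin{equation*}
\frac{1}{\sqrt{-i(\rho+\tau)}}=\int_{\mathbb R}e^{\pi i(\rho+\tau)t^2}\,dt,
\end{equation*}
valid since $\operatorname{Im}(\rho+\tau)>0$ for $\rho$ on the ray $[0,i\infty)$ and $\tau\in\mathbb H$. Inserting this, interchanging the order of integration, and expanding $g_{a+\frac12,b+\frac12}$ via its defining series \eqref{def_gab}, the $\rho$-integral becomes elementary, namely $\int_0^{i\infty}e^{\pi i(\nu^2+t^2)\rho}\,d\rho=\frac{i}{\pi(\nu^2+t^2)}$ for each $\nu\in a+\frac12+\mathbb Z$. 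This yields
\begin{equation*}
\int_0^{i\infty}\frac{g_{a+\frac12,b+\frac12}(\rho)}{\sqrt{-i(\rho+\tau)}}\,d\rho=\frac{i}{\pi}\int_{\mathbb R}e^{\pi i\tau t^2}\,\Sigma(t)\,dt,\qquad \Sigma(t):=\sum_{\nu\in a+\frac12+\mathbb Z}\frac{\nu\,e^{2\pi i\nu(b+\frac12)}}{\nu^2+t^2}.
\end{equation*}

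\emph{The lattice sum.} Next I would evaluate $\Sigma(t)$ in closed form. Splitting $\frac{\nu}{\nu^2+t^2}=\frac12\bigl(\frac{1}{\nu-it}+\frac{1}{\nu+it}\bigr)$ reduces $\Sigma$ to two Lipschitz-type sums $\sum_\nu e^{2\pi i\nu(b+\frac12)}(\nu\pm it)^{-1}$, each summable by the classical cotangent partial-fraction formula; the result expresses $\Sigma$ as an elementary combination of twisted hyperbolic secants in $t$, with simple poles at $t=\pm i\nu$ and residues $\pm\frac{1}{2i}e^{2\pi i\nu(b+\frac12)}$. As a consistency check, in the untwisted case $a=b=0$ the two pole families coincide and the Mittag--Leffler expansion $\frac{1}{\cosh\pi t}=\frac1\pi\sum_{\mu\in\frac12+\mathbb Z}\frac{(-1)^{\mu-\frac12}\mu}{\mu^2+t^2}$ gives $\Sigma(t)=\frac{i\pi}{\cosh\pi t}$; feeding this back reproduces $h(0;\tau)=\int_{\mathbb R}\frac{e^{\pi i\tau t^2}}{\cosh\pi t}\,dt$ exactly, with the prefactor equal to $-1$ as required. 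To sidestep the conditional convergence created by the factor $\nu$, I would realize $\Sigma$ as a derivative in the parameter $b$ of the absolutely convergent sum $\sum_\nu(\nu^2+t^2)^{-1}e^{2\pi i\nu(b+\frac12)}$.

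\emph{Recovering $h$ and the main obstacle.} With $\Sigma$ in closed form, it remains to evaluate $\int_{\mathbb R}e^{\pi i\tau t^2}\Sigma(t)\,dt$. Shifting the contour by an imaginary amount proportional to $a$ and completing the square, the elementary hyperbolic factor of $\Sigma$ turns into the integrand $\frac{e^{-2\pi(a\tau-b)t}}{\cosh\pi t}$ of \eqref{def_hmordell}, while the Gaussian cross-term produces precisely the factor $e\!\left(\frac{a^2\tau}{2}-a(b+\frac12)\right)$; collecting the constant $\frac{i}{\pi}$ against the normalization of $\Sigma$ supplies the overall sign $-1$, giving the assertion. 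The main obstacle is this final bookkeeping: pinning down the exact phases and normalizing constant in the closed form of $\Sigma$ (where the two pole families $t=\pm i\nu$ must both be accounted for), and verifying that the contour shift crosses none of these poles --- this is exactly where the hypothesis $a,b\in(-\frac12,\frac12)$ is used, since it keeps the poles nearest the real axis off the shifted contour. The supporting analytic facts --- absolute convergence of the double integral over $[0,i\infty)\times\mathbb R$ (which legitimizes Fubini) and the exponential decay of $g_{a+\frac12,b+\frac12}$ at both cusps $0$ and $i\infty$ (the former via Proposition \ref{prop_Zg}(4)) --- are routine once the index $\nu=0$ is excluded, as it is precisely because $a,b\in(-\frac12,\frac12)$.
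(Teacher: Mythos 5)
Your strategy is viable and is genuinely different from the source proof. The paper itself gives no argument here --- it imports the statement from Zwegers' thesis --- and Zwegers' route is structural rather than computational: he first proves the companion identity expressing $R(a\tau-b;\tau)$ as the nonholomorphic Eichler integral $\int_{-\bar\tau}^{i\infty}g_{a+\frac12,b+\frac12}(\rho)(-i(\rho+\tau))^{-\frac12}d\rho$ (up to the same elementary multiplier), obtained by termwise integration against error-function identities; the present statement then falls out by combining that identity with Proposition \ref{Rtransform}(5), which writes $h(u;\tau)$ as $R(u;\tau)$ plus a modular transform of $R$, and with Proposition \ref{prop_Zg}(4): the substitution $\rho\mapsto -1/\rho$ converts the transformed integral into $\int_0^{-\bar\tau}$, and the two pieces glue into $\int_0^{i\infty}$, the $\bar\tau$-dependence cancelling. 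Your proposal instead evaluates the period integral directly by the Gaussian kernel, the Lipschitz/Mittag--Leffler expansion, and a contour shift --- essentially Mordell's classical method run in reverse. This buys self-containedness (no $R$, no error functions), at the cost of delicate bookkeeping; Zwegers' route buys the $R$-identity as a byproduct, which is what completion arguments like those in this paper actually consume. Your elementary reductions check out: the $\rho$-integral value $\frac{i}{\pi(\nu^2+t^2)}$, the residues $\frac{1}{2i}e^{2\pi i\nu(b+\frac12)}$, and the $a=b=0$ consistency computation are all correct.

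Two steps, however, are not correct as stated. First, the Fubini interchange is not ``routine'': the triple (sum over $\nu$, $d\rho$, $dt$) is \emph{not} absolutely convergent, since the $\nu$-th term contributes $|\nu|\int_0^\infty e^{-\pi\nu^2 s}(s+\operatorname{Im}\tau)^{-\frac12}ds\asymp |\nu|^{-1}$, and $\sum_\nu|\nu|^{-1}$ diverges logarithmically. The repair is the very device you mention, but it must be deployed at the level of the interchange, not merely to sum $\Sigma$: run the whole argument for the weight-$\frac12$ theta $\sum_{\nu}e^{\pi i\nu^2\rho+2\pi i\nu b}$ (absolutely summable, since $\sum_\nu \nu^{-2}<\infty$) and then differentiate in $b$ under the integral sign. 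Second, your claim that $a,b\in(-\frac12,\frac12)$ guarantees the shift crosses no poles is false for the shift you describe: $\Sigma$ has poles at \emph{both} families $t=\pm i\nu$, $\nu\in a+\tfrac12+\mathbb Z$, so the poles nearest the real axis sit at heights $\pm(\tfrac12-|a|)$ and $\pm(\tfrac12+|a|)$, and a shift to height $a$ crosses the pole at $i\left(\tfrac12-|a|\right)$ whenever $\tfrac14<|a|<\tfrac12$ (and passes through it at $|a|=\tfrac14$), producing an unaccounted residue term. The clean fix is to exploit the evenness of $\Sigma$ in $t$ \emph{before} shifting: by $t\mapsto -t$ the two Lipschitz pieces contribute equally, so $\int_{\mathbb R}e^{\pi i\tau t^2}\Sigma(t)\,dt$ reduces to an integral against a single Lipschitz kernel whose poles lie only at $t\in i\left(a+\tfrac12+\mathbb Z\right)$; none of these heights lies in $(0,a]$ for any $|a|<\tfrac12$, so the shift by $ia$ is then legitimate, and this is where the hypothesis on $a$ is actually used (the hypothesis on $b$ enters through the validity of the Lipschitz formula and the decay of the kernel). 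With these two repairs your outline closes to a complete proof.
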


\section{The quantum set}\label{quantumSet}


 We call a subset $S \subseteq \Q$ a {\em quantum set} for a function $F$ with respect to the group $G\subseteq \textnormal{SL}_2(\Z)$ if both $F(x)$ and $F(Mx)$ exist (are non-singular) for all $x\in S$ and $M\in G$.   

In this section, we will show that $\qs$ as defined in \eqref{qSetDef} is a quantum set for $\mathcal{A}_n$ with respect to the group $\qSubgroup$.  Recall that $\qs$ is defined as

\begin{align*} \qs := \left\{\frac{h}{k}\in \Q\; \middle\vert\; \begin{aligned} & \ h\in\Z, k\in\N, \gcd(h,k) = 1, \ \beta_j \nmid k\ \forall\ 1\le j\le n,\\&\left\vert \frac{\alpha_j}{\beta_j}k - \left[\frac{\alpha_j}{\beta_j}k\right]\right\vert > \frac{1}{6}\ \forall\ 1\le j\le n\end{aligned} \right\},\end{align*}
where $[x]$ is the closest integer to $x$ (see Remark \ref{rmk:closest_int}).


Moreover, recall that the ``holomorphic part''  we consider (see \S \ref{sec_results}) is  $\mathcal A_n(\tau) = q^{-\frac{1}{24}} R_n(\boldsymbol{\zeta_n}; q)$.  To show that $\qs$ is a quantum set for $\mathcal A_n(\tau)$, we must first show that the the multi-sum defining $R_n(\boldsymbol{\zeta_n}; \zeta_k^h)$ converges for $\frac{h}{k}\in\qs$.  In what follows, as in the definition of $\qs$, we take $h\in\Z$, $k\in\N$ such that $\gcd(h,k) = 1$.

We start by addressing the restriction that for $\frac{h}{k}\in \qs$, $\beta_j\nmid k$ for all $1 \le j \le n$.

\begin{lemma} For $\frac{h}{k}\in \Q$, all summands of $R_n(\boldsymbol{\zeta_n}; \zeta_k^h)$ are finite if and only if $\beta_j \nmid k$ for all $1\le j \le n$.\end{lemma}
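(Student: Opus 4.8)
The plan is to reduce the statement to a computation of when a single $q$-Pochhammer factor in the denominator of \eqref{rkorigdef} can vanish. First I would observe that every summand of $R_n(\bs{\zeta_n};\zeta_k^h)$ has numerator a power of the root of unity $q=\zeta_k^h$, hence nonzero; so a summand is finite precisely when its denominator is nonzero. The denominator is a product of $q$-Pochhammer symbols, each factor of which has the shape $1-\zeta_{\beta_i}^{\pm\alpha_i}q^{p}$ for some index $1\le i\le n$ and some exponent $p$. Thus the whole question becomes: for which $k$ does there exist a summand index and a factor with $1-\zeta_{\beta_i}^{\pm\alpha_i}\zeta_k^{hp}=0$, equivalently $\pm\tfrac{\alpha_i}{\beta_i}+\tfrac{hp}{k}\in\Z$?

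Next I would pin down exactly which exponents $p$ occur. Writing $s_{i-1}:=m_1+\cdots+m_{i-1}$, the factors attached to $x_i=\zeta_{\beta_i}^{\alpha_i}$ are $1-x_iq^{j}$ for $1\le j\le m_1$ when $i=1$, and $1-x_iq^{s_{i-1}+j}$ for $0\le j\le m_i$ when $i\ge 2$ (and likewise with $x_i$ replaced by $1/x_i$). Because $m_1>0$ while the remaining indices range freely, I would verify that, as the summation index runs over all admissible tuples, the exponent $p$ attached to each fixed $i$ runs over exactly the positive integers $p\ge 1$; for instance, taking $m_1=p$ and all other $m$'s equal to $0$ realizes exponent $p$ for every $i$. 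Consequently, some summand is singular on account of the $i$-th pair of Pochhammer symbols if and only if there is an integer $p\ge 1$ with $\tfrac{\alpha_i}{\beta_i}+\tfrac{hp}{k}\in\Z$ or $-\tfrac{\alpha_i}{\beta_i}+\tfrac{hp}{k}\in\Z$.

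The last step is elementary number theory. Since $\gcd(h,k)=1$, the residues $hp\bmod k$ for $p=1,\dots,k$ already exhaust $\Z/k\Z$, so $\{\,hp/k\bmod 1 : p\ge 1\,\}$ is the full set of multiples of $1/k$ modulo $1$. Hence such a $p$ exists if and only if $\tfrac{\alpha_i}{\beta_i}$ is congruent modulo $1$ to a multiple of $1/k$, i.e. $k\,\tfrac{\alpha_i}{\beta_i}\in\Z$, i.e. $\beta_i\mid k\alpha_i$; and because $\gcd(\alpha_i,\beta_i)=1$ this is equivalent to $\beta_i\mid k$. Assembling the pieces, all summands are finite iff no factor ever vanishes iff $\beta_i\nmid k$ for every $1\le i\le n$, which is exactly the assertion. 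I expect the only point requiring genuine care — rather than a true obstacle — to be the bookkeeping in the middle step: correctly reading off the admissible exponents from the nested Pochhammer symbols and keeping the quantifiers straight, since ``all summands finite'' is precisely the negation of ``there exist a summand index and a factor that vanish.''
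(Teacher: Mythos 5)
Your proof is correct, and it finishes the argument by a different (and in one respect more complete) route than the paper. Both proofs start from the same reduction: each summand is a root of unity divided by a product of factors $1-\zeta_{\beta_i}^{\pm\alpha_i}\zeta_k^{hp}$, so finiteness of all summands is exactly the statement that $\pm\frac{\alpha_i}{\beta_i}+\frac{hp}{k}\notin\Z$ for every index $i$ and every exponent $p$ that occurs. The paper then runs a divisibility argument through $K=\lcm(\beta_i,k)=\beta_i\beta_i'=kk'$: a vanishing factor would force $k'\mid \alpha_i\beta_i'$, which in turn forces $\beta_i\mid k$; taking the contrapositive, $\beta_i\nmid k$ for all $i$ implies every summand is finite. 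Note that this is only the ``if'' direction of the lemma; the paper never exhibits a singular summand when some $\beta_i\mid k$. You instead exploit invertibility of $h$ modulo $k$: the residues $hp\bmod k$, $p\ge 1$, exhaust $\Z/k\Z$, so a vanishing factor attached to index $i$ exists if and only if $k\alpha_i/\beta_i\in\Z$, i.e.\ $\beta_i\mid k\alpha_i$, i.e.\ $\beta_i\mid k$ using $\gcd(\alpha_i,\beta_i)=1$. This single computation delivers both implications at once, but the ``only if'' implication genuinely requires your middle step --- the verification that for each fixed $i$ the exponents $p$ occurring among the Pochhammer factors run over all of $\Z_{\ge 1}$ (realized, as you say, by $m_1=p$ and all other $m_j=0$), and never equal $0$ since $s_{i-1}\ge m_1\ge 1$. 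That bookkeeping is exactly the piece the paper omits, so your write-up is, if anything, the more complete of the two; the paper's lcm manipulation buys nothing extra here beyond avoiding explicit mention of the bijectivity of $p\mapsto hp \pmod k$.
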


\begin{proof}
Examining the multi-sum $R_n(\boldsymbol{\zeta_n}; \zeta_k^h)$, we see that all terms are a power of $\zeta_k^h$ divided by a product of factors of the form $1 - \zeta_{\beta_j}^{\pm\alpha_j} \zeta_k^{hm}$ for some integer $m\ge 1$.  Therefore, to have each summand be finite, it is enough to ensure that $1 - \zeta_{\beta_j}^{\pm\alpha_j} \zeta_k^{hm} \neq 0$ for all $m\ge 1$ and for all $1\le j \le n$.  For ease of notation in this proof, we will omit the subscripts for $\alpha_j$ and $\beta_j$.

If $1 - \zeta_{\beta}^{\pm\alpha} \zeta_k^{hm} = 0$ for some $m\in\N$, we have that
\[\pm\frac{\alpha}{\beta} + \frac{hm}{k} \in\Z.\]
Let $K = \lcm(\beta, k) = \beta\beta^\prime = kk^\prime$. 
Then $\pm\frac{\alpha}{\beta} + \frac{hm}{k} \not\in\Z$ is the same as $\pm\alpha\beta^\prime + hmk^\prime \not\in K\Z$.  Since $K = kk^\prime$, if $k^\prime \nmid \alpha\beta^\prime$, this is always true and we do not have a singularity.

However, since $K = \beta\beta^\prime = kk^\prime$, if $k^\prime \vert \alpha\beta^\prime$, then $\frac{\beta\beta^\prime}{k} \vert \alpha\beta^\prime$.  This implies that $\beta \vert \alpha k$ and that $\beta \vert k$ since $\gcd(\alpha, \beta) = 1$.

Therefore, if $\beta\nmid k$, it is always the case that $k^\prime\nmid \alpha\beta^\prime$, so for all $m\in\N$, 
\[\pm\frac{\alpha}{\beta} + \frac{hm}{k} \not\in\Z.\]
\end{proof}

Now that we have shown that all summands in $R_n(\boldsymbol{\zeta_n}; \zeta_k^h)$ are finite for $\frac{h}{k}\in\qs$, we will show that the sum converges.

\begin{theorem}\label{qsProof} For $\boldsymbol{\zeta_n}$ as in \eqref{zetavec}, if $\frac{h}{k} \in \qs$, then $R_n(\boldsymbol{\zeta_n}; \zeta_k^h)$ converges and can be evaluated as a finite sum.  In particular,  we have that: 
 
\begin{multline} \label{eqn_Rnconvsum}
R_n(\boldsymbol{\zeta_n}; \zeta_k^h) = \prod_{j=1}^n \frac{1}{1 - ((1-x_j^k)(1-x_j^{-k}))^{-1}}\\
\times \!\!\!\!\! \sum_{\substack{0 < m_1\le k\\ 0 \le m_2, \dots, m_n < k}} \frac{\zeta_k^{h[(m_1 + m_2 + \dots + m_n)^2 + (m_1 + \dots + m_{n-1}) + (m_1 + \dots + m_{n-2}) + \dots + m_1]}}{(x_1\zeta_k^h;\zeta_k^h)_{m_1} \left(\frac{\zeta_k^h}{x_1};\zeta_k^h\right)_{m_1} (x_2 \zeta_k^{hm_1};\zeta_k^h)_{m_2 + 1} \left(\frac{\zeta_k^{hm_1}}{x_2};\zeta_k^h\right)_{m_2+1}} \\ 
\times \frac{1}{(x_3 \zeta_k^{h(m_1 + m_2)};\zeta_k^h)_{m_3 + 1}\!\!\left(\frac{\zeta_k^{h(m_1 + m_2)}}{x_3};\zeta_k^h\!\right)_{\!m_3 + 1} \!\!\!\!\!\!\!\!\!\!\cdots(x_n \zeta_k^{h(m_1 + \dots + m_{n-1})};\zeta_k^h)_{ m_n+1} \!\!\left(\!\frac{\zeta_k^{h(m_1 + \dots + m_{n-1})}}{x_n};\zeta_k^h\!\right)_{\!  m_n+1} },
\end{multline}   
where $\boldsymbol{\zeta_n} = (x_1, x_2, \dots, x_n)$.
\end{theorem}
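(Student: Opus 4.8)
The plan is to exploit the fact that $q=\zeta_k^h$ is a primitive $k$-th root of unity (recall $\gcd(h,k)=1$), so that $q^k=1$ and the $q$-Pochhammer symbols collapse under index shifts by $k$. The preceding lemma already guarantees that every summand is finite when $\beta_j\nmid k$ for all $j$; what remains is to prove convergence and extract the closed form. The engine of the argument is the identity that for any $a\in\mathbb C$ and any $m$,
\[
(aq^m;q)_k=\prod_{\ell=0}^{k-1}\bigl(1-a\,\zeta_k^{\ell}\bigr)=1-a^k,
\]
since $q^m,q^{m+1},\dots,q^{m+k-1}$ runs over all $k$-th roots of unity; crucially, the result is independent of $m$.

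First I would establish a periodicity relation for the summand. Writing $T(m_1,\dots,m_n)$ for the general term of $R_n(\boldsymbol{\zeta_n};\zeta_k^h)$, I claim that shifting one index by $k$ gives
\[
T(m_1,\dots,m_i+k,\dots,m_n)=\frac{1}{(1-x_i^k)(1-x_i^{-k})}\,T(m_1,\dots,m_n).
\]
I check the numerator and denominator separately. The exponent $E=(m_1+\dots+m_n)^2+\sum_{\ell=1}^{n-1}(m_1+\dots+m_\ell)$ changes by a multiple of $k$ under $m_i\mapsto m_i+k$ (the square contributes $2kS+k^2$ with $S=m_1+\dots+m_n$, and the linear terms contribute further multiples of $k$), so $q^E$ is unchanged. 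In the denominator, $m_i$ appears only as the length of the $i$-th Pochhammer pair and inside the $q$-power arguments of the pairs indexed $j>i$; but those arguments change only by $q^k=1$ and so are unaffected. Hence the sole net effect is that the $i$-th pair $(x_i q^{M};q)_{m_i+1}(q^{M}/x_i;q)_{m_i+1}$ (respectively $(x_1q;q)_{m_1}(q/x_1;q)_{m_1}$ when $i=1$) lengthens by $k$ and so acquires the factor $(1-x_i^k)(1-x_i^{-k})$ by the collapse identity. This proves the claim, with ratio $\rho_i:=\bigl((1-x_i^k)(1-x_i^{-k})\bigr)^{-1}$ independent of all indices.

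Next I would decompose the multisum over its fundamental domain. Every $m_1>0$ is uniquely $m_1=r_1+ka_1$ with $1\le r_1\le k$, $a_1\ge0$, and every $m_i\ge0$ ($i\ge2$) is uniquely $m_i=r_i+ka_i$ with $0\le r_i\le k-1$, $a_i\ge0$. Iterating the periodicity relation gives $T(r_1+ka_1,\dots,r_n+ka_n)=\rho_1^{a_1}\cdots\rho_n^{a_n}\,T(r_1,\dots,r_n)$, whence
\[
R_n(\boldsymbol{\zeta_n};\zeta_k^h)=\sum_{r_1,\dots,r_n}T(r_1,\dots,r_n)\prod_{i=1}^n\sum_{a_i\ge0}\rho_i^{a_i}=\prod_{i=1}^n\frac{1}{1-\rho_i}\sum_{r_1,\dots,r_n}T(r_1,\dots,r_n),
\]
where the finite outer sum ranges over $1\le r_1\le k$, $0\le r_2,\dots,r_n\le k-1$; this is exactly the asserted formula.

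The crux is convergence, and this is where the defining inequality of $\qs$ enters. Since $x_i=\zeta_{\beta_i}^{\alpha_i}$, a direct computation gives $(1-x_i^k)(1-x_i^{-k})=2-2\cos(2\pi k\alpha_i/\beta_i)=4\sin^2(\pi k\alpha_i/\beta_i)$, which is real and positive. The condition $\bigl|\tfrac{\alpha_i}{\beta_i}k-[\tfrac{\alpha_i}{\beta_i}k]\bigr|>\tfrac16$ is precisely $|\sin(\pi k\alpha_i/\beta_i)|>\tfrac12$, i.e. $4\sin^2(\pi k\alpha_i/\beta_i)>1$, i.e. $|\rho_i|<1$. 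Thus each geometric series $\sum_{a_i\ge0}\rho_i^{a_i}$ converges absolutely, and since the finitely many values $|T(r_1,\dots,r_n)|$ are bounded, the full multisum converges absolutely; this absolute convergence simultaneously justifies the rearrangement above and yields the finite evaluation. I expect the main obstacle to be the bookkeeping in the periodicity step—verifying that the root-of-unity collapse leaves the downstream Pochhammer arguments genuinely unchanged and that the fundamental-domain decomposition reproduces exactly the stated index ranges—rather than the convergence estimate, which reduces cleanly to the $\tfrac16$ threshold.
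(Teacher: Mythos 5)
Your proof is correct and takes essentially the same route as the paper: both decompose each index as $m_i = r_i + k a_i$ over a fundamental domain, use the root-of-unity collapse of the Pochhammer symbols (the paper's identity $(x\zeta^r;\zeta)_{s+Mk}=(1-x^k)^M(x\zeta^r;\zeta)_s$ is exactly your periodicity step) to peel off the factors $\rho_i = \bigl((1-x_i^k)(1-x_i^{-k})\bigr)^{-1}$, and identify the $\tfrac16$-condition defining $\qs$ with $2-2\cos(2\pi k\alpha_i/\beta_i)>1$, i.e.\ convergence of the resulting geometric series. Your explicit appeal to absolute convergence to justify the rearrangement, and your treatment of general $n$ rather than writing out $n=2$ and asserting the extension, are only presentational refinements of the same argument.
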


\begin{proof}[Proof of Theorem \ref{qsProof}]
We start by taking $\frac{h}{k} \in \qs$, and write $\zeta = \zeta_k^h$.  For ease of notation, we will use $x_j$ to denote the $j$-th component in $\boldsymbol{\zeta_n}$, so $x_j = e^{2\pi i \alpha_j/\beta_j}$.  Further, for clarity of argument, we will carry out the proof in the case of $n = 2$, with comments throughout about how the proof follows for $n > 2$.  We have that 
\begin{align} \nonumber
R_2((x_1,x_2); \zeta) =& \sum_{\substack{m_1 > 0\\ m_2\ge 0}} \frac{\zeta^{(m_1+m_2)^2 + m_1}}{(x_1\zeta;\zeta)_{m_1}(x_1^{-1}\zeta;\zeta)_{m_1}(x_2\zeta^{m_1};\zeta)_{m_2+1}(x_2^{-1}\zeta^{m_1};\zeta)_{m_2+1}}\\
\label{sumRearranged3} =&\sum_{M_1, M_2 \ge 0} \frac{1}{(1 - x_1^k)^{M_1} (1 - x_1^{-k})^{M_1} (1-x_2^k)^{M_2}(1-x_2^{-k})^{M_2}}\\ \label{sumRearranged2}
	&\times \sum_{\substack{0 < s_1 \le k\\ 0\le s_2 < k}}\frac{\zeta^{(s_1+s_2)^2+s_1}}{(x_1\zeta;\zeta)_{s_1}(x_1^{-1}\zeta;\zeta)_{s_1}(x_2\zeta^{s_1};\zeta)_{s_2+1}(x_2^{-1}\zeta^{s_1};\zeta)_{s_2+1}},
	\end{align}
where we have let $m_j = s_j + M_j k$ for $0 < s_1 \le k$, $0 \le s_2 < k$, and $M_j\in\N_0$, and have used the fact that 
\[ (x\zeta^r;\zeta)_{s+Mk}  =   (1 - x^k)^M \; (x\zeta^r; \zeta)_s ,\] which holds for any $M, r, s\in\N_0$.  
(We note that  for $n > 2$, we proceed as above, additionally taking $0 \le s_j \le k-1$ for $j > 2$.)  
The second sum in \eqref{sumRearranged2} is a finite sum, as desired.  For the first sum in (\ref{sumRearranged3})  we notice that we in fact have the product of two geometric series, each of the form
\[\sum_{M_j\ge 0} \left(\frac{1}{ (1-x_j^k)( 1 - x_j^{-k})}\right)^{M_j}.\]
By definition, we have $x_j = \cos\theta_j + i\sin\theta_j$ where $\theta_j = \frac{2\pi\alpha_j}{\beta_j}$.  Therefore, this sum converges if and only if  \begin{align*}
	\vert 1 - x_j^k\vert  \vert 1 - x_j^{-k}\vert   =2 - 2\cos(k\theta_j) > 1 \iff \cos(k\theta_j) < \frac{1}{2}.
\end{align*} 
For $\cos(k\theta_j) < \frac{1}{2}$, it must be that $k\theta_j = r + 2\pi M$ where $-\pi < r \le \pi$, $\vert r \vert > \frac{\pi}{6}$, and $M \in\Z$.  This is equivalent to saying 
\[\left\vert \frac{\alpha_j}{\beta_j}k - \left[\frac{\alpha_j}{\beta_j}k\right]\right\vert > \frac{1}{6}\ \  \forall\ 1\le j\le n,\]
as in the definition of $\qs$ in \eqref{qSetDef}. Therefore, we see that for $\frac{h}{k}\in\qs$, $R_2((x_1, x_2); \zeta)$ converges to the claimed expression in \eqref{eqn_Rnconvsum}. 
 
We note that by Abel's theorem, having shown convergence of $R_2((x_1, x_2); \zeta)$, we have that $R_2((x_1, x_2); q)$ converges to $R_2((x_1, x_2); \zeta)$ as $q\to\zeta$ radially within the unit disc.

As noted, the above argument extends to $n > 2$.  Letting $m_j = s_j + M_j k$ with $0 < s_1 \le k$ and $0 \le s_j < k$ for $j \ge 2$, rewriting as in \eqref{eqn_Rnconvsum}, and then summing the resulting geometric series gives the desired exact formula for $R_n(\boldsymbol{\zeta_n}; \zeta)$.
\end{proof}

 To complete  the argument that $\qs$ is a quantum set for $R_n(\boldsymbol{\zeta_n}; \zeta)$ with respect to $\Gamma_{\boldsymbol{\zeta_n}}$, it remains to be seen that
 $R_n(\boldsymbol{\zeta_n}; \xi)$ converges, where $\xi = e^{2\pi i \gamma(\frac{h}{k})}$ for $\frac{h}{k}\in\qs$ and $\gamma\in \qSubgroup$, defined in \eqref{eqn:GroupDefn}.  For the ease of the reader, we recall  from \eqref{def_ell} and \eqref{eqn:GroupDefn}  that
\begin{align*} \qSubgroup := \left\langle \left(\begin{matrix} 1 & 1\\ 0 & 1\end{matrix}\right), \left(\begin{matrix} 1 & 0\\ \ell & 1\end{matrix}\right)\right\rangle,\end{align*}
where
\[\ell = \ell_{\beta} := \begin{cases} 6\left[\text{lcm}(\beta_1, \dots, \beta_{k})\right]^2 &\text{ if $\forall j$, $3\not\vert \beta_j$}\\ 2\left[\text{lcm}(\beta_1, \dots, \beta_{k})\right]^2 &\text{ if $\exists j$, $3 \vert \beta_j$.}\end{cases}\]
The convergence of $R_n(\boldsymbol{\zeta_n}; \xi)$ is a direct consequence of the following lemma.

\begin{lemma}\label{setClosed} The set $\qs$ is closed under the action of $\qSubgroup$.\end{lemma}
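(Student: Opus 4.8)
The plan is to show that $\qs$ is preserved by each generator of $\qSubgroup$ separately, since a set stable under a generating set of a group is stable under the whole group. The two generators are $T = \sm{1}{1}{0}{1}$ and $S_\ell = \sm{1}{0}{\ell}{1}$, so it suffices to verify that if $\frac{h}{k}\in\qs$ then $T\cdot\frac{h}{k}$ and $S_\ell\cdot\frac{h}{k}$ again lie in $\qs$.

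I would first dispatch the generator $T$. Here $T\cdot\frac{h}{k} = \frac{h+k}{k}$, so the denominator $k$ is unchanged and the numerator is shifted by $k$. Since $\gcd(h+k,k)=\gcd(h,k)=1$, the coprimality condition survives, and the condition $\beta_j\nmid k$ is untouched because $k$ is unchanged. For the third condition, the quantity $\frac{\alpha_j}{\beta_j}k$ that appears in $|\frac{\alpha_j}{\beta_j}k - [\frac{\alpha_j}{\beta_j}k]|$ depends only on $k$, not on the numerator, so it too is unchanged. Hence $T\cdot\frac{h}{k}\in\qs$ trivially.

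The substantive case is $S_\ell$, where $S_\ell\cdot\frac{h}{k} = \frac{h}{\ell h + k}$. The new denominator is (up to sign, after putting the fraction in lowest terms) $k' := \ell h + k$, so I must check all three defining conditions for $\frac{h}{k'}$. Coprimality is immediate: $\gcd(h,\ell h+k)=\gcd(h,k)=1$. The heart of the argument is the divisibility and the $\tfrac16$-gap conditions, and this is where the precise value of $\ell$ in \eqref{def_ell} does the work. Writing $L := \operatorname{lcm}(\beta_1,\dots,\beta_n)$, each $\beta_j$ divides $L$ and hence divides $L^2$, which divides $\ell$; therefore $\ell h \equiv 0 \pmod{\beta_j}$, giving $k' \equiv k \pmod{\beta_j}$ for every $j$. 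This single congruence simultaneously preserves the condition $\beta_j\nmid k'$ (since $\beta_j\nmid k$) and, crucially, shows that $\frac{\alpha_j}{\beta_j}k' \equiv \frac{\alpha_j}{\beta_j}k \pmod{\alpha_j}$ differs from $\frac{\alpha_j}{\beta_j}k$ by an integer (namely $\frac{\alpha_j}{\beta_j}\ell h$, which is an integer because $\beta_j\mid\ell$). Since $|x-[x]|$ is invariant under integer shifts of $x$, the $\tfrac16$-gap inequality for $k'$ reduces to the identical inequality for $k$, which holds by hypothesis.

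The step I expect to require the most care is verifying that $\frac{\alpha_j}{\beta_j}\ell h\in\Z$, i.e.\ that $\beta_j\mid\ell$, in both branches of the definition of $\ell$; this is exactly why $\ell$ is built from $L^2$ (rather than $L$) with the factor $6$ or $2$, and why I must confirm the argument is insensitive to which branch of \eqref{def_ell} is in force. The only subtlety is that $\frac{h}{k'}$ may not be in lowest terms as written, or $k' = \ell h + k$ may be negative; I would handle this by noting that reducing to lowest terms or negating both numerator and denominator changes neither $|\frac{\alpha_j}{\beta_j}k' - [\frac{\alpha_j}{\beta_j}k']|$ (these depend only on the reduced denominator up to sign) nor the divisibility relations, so the membership $\frac{h}{k'}\in\qs$ is unaffected. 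Combining the $T$ and $S_\ell$ cases, $\qs$ is closed under $\qSubgroup$, as claimed.
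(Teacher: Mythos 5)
Your proposal is correct and follows essentially the same route as the paper: reduce to the generators $T$ and $S_\ell$, observe that $T$ leaves the denominator (and hence all three conditions) untouched, and for $S_\ell$ use $\beta_j \mid \ell$ so that $\frac{\alpha_j}{\beta_j}\ell h \in \Z$, making the $\frac16$-gap quantity invariant under the shift $k \mapsto \ell h + k$. Your additional attention to signs and lowest terms (and the stray ``$\pmod{\alpha_j}$'', which your parenthetical correctly overrides) goes slightly beyond the paper's proof but does not change the argument.
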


\begin{proof}
Since $\qSubgroup$ is given as a set with two generators, it is enough to show that $\qs$ is closed under action of each of those generators. 

Let $\frac{h}{k}\in\qs$.  Then $\sm{1}{1}{0}{1}\frac{h}{k} = \frac{h + k}{k}$.  Note that $\gcd(h+k, k) = \gcd(h,k) = 1$ and we already know that $k$ satisfies the conditions in the definition of $\qs$.  Therefore, $\sm{1}{1}{0}{1}\frac{h}{k}\in\qs$.

Under the action of $\sm{1}{0}{\ell}{1}$, we have
\[\left(\begin{array}{cc}1 & 0 \\ \ell & 1\end{array}\right)\frac{h}{k} = \frac{h}{h\ell + k}.\]
We first note that $\gcd(h, h\ell + k) = \gcd(h,k) = 1$, and $\beta_j \nmid (h\ell + k)$ as $\beta_j \vert \ell$ and $\beta_j \nmid k$.  It remains to check that
\[\left\vert \frac{\alpha_j}{\beta_j}(h\ell + k) - \left[\frac{\alpha_j}{\beta_j}(h\ell + k)\right]\right\vert > \frac{1}{6}\ \forall\ 1\le j\le n.\]
We have that
\begin{align}\nonumber
	\left\vert \frac{\alpha_j}{\beta_j} (h \ell + k) - \left[\frac{\alpha_j}{\beta_j}(h\ell + k)\right]\right\vert &= \left\vert \frac{\alpha_j h\ell}{\beta_j} + \frac{\alpha_j}{\beta_j}k - \left[\frac{\alpha_j h\ell}{\beta_j} + \frac{\alpha_j}{\beta_j}k\right]\right\vert\\
	&= \left\vert\frac{\alpha_j}{\beta_j} k - \left[\frac{\alpha_j}{\beta_j} k\right]\right\vert > \frac{1}{6},\label{closestIntSimplification}
\end{align}
where we can simplify as in \eqref{closestIntSimplification} since, by definition of $\ell$, $\frac{\alpha_j\ell}{\beta_j} \in\Z$.  Thus, $\qs$ is closed under the action of $\qSubgroup$. \end{proof}

\section{Proof of Theorem \ref{thm_main_N0}}\label{n0proof} 

We  now  prove Theorem \ref{thm_main_N0}. 
Our first goal is to establish that $H_{n,\gamma}$ is analytic in $x$ on $\mathbb{R} - \{\frac{-c}{d}\}$ for all $x\in \qs$ and $\gamma = \left(\begin{smallmatrix}a & b \\ c & d \end{smallmatrix}\right) \in \qSubgroup$.  As shown in Section \ref{quantumSet}, we have that $\mathcal A_n(x)$ and $\mathcal A_n(\gamma x)$ are defined for all $x\in\qs$ and $\gamma\in \qSubgroup$.  Note that it suffices to consider only the generators $S_\ell$ and $T$ of $\qSubgroup$, since $$H_{n,\gamma \gamma'}(\tau)= H_{n,\gamma'}(\tau) + \chi_{\gamma'}(C\tau+D)^{-\frac12} H_{n,\gamma}(\gamma'\tau)$$ for $\gamma = \left(\begin{smallmatrix}a & b \\ c & d \end{smallmatrix}\right)$ and $\gamma' = \left(\begin{smallmatrix}A & B \\ C & D \end{smallmatrix}\right)$.

First, consider $\gamma = T$.  Then by definition, $\chi_T=\zeta_{24}$, and so $H_{n,T}(x) = \mathcal A_n(x) - \zeta_{24} \mathcal A_n(x+1)$.
When we map $x \mapsto x +1$, $q=e^{2\pi i x}$ remains invariant.   Then since the definition of $R_{n}(x)$ in \eqref{rkorigdef} can be expressed as a series only involving integer powers of $q$, it is also invariant.  Thus 
$$\mathcal A_n(x+1)=e^{\frac{ -2\pi i   (x+1)}{24}}R_{n}(x) = \zeta_{24}^{-1}\mathcal A_n(x),$$ and so
$H_{n,T}(x)=0$.

We now consider the case $\gamma = S_\ell$.  In this case using \eqref{Chi_gammaForm} we calculate that $\chi_{S_\ell}=\zeta_{24}^{-\ell}$.  Thus, $$H_{n,S_\ell}(x) = \mathcal A_n(x) - \zeta_{24}^{-\ell}(\ell x +1)^{-\frac12} \mathcal A_n(S_\ell x).$$  From the modularity of  $\widehat{\mathcal A}_n$ we have that 
$\widehat{\mathcal A}_n(x) = \zeta_{24}^{-\ell}(\ell x +1)^{-\frac12}\widehat{\mathcal A}_n(S_\ell x)$. Thus \eqref{Ahat} and \eqref{rel_AR} give that 
\begin{equation}\label{eq:HviaA-}
H_{n,S_\ell}(x) =-\mathcal A_n ^-(x)+\zeta_{24}^{-\ell}(\ell x +1)^{-\frac12}\mathcal A_n^-(S_\ell x),
\end{equation}
 where $\mathcal A_n^-$ is defined in \eqref{def_A-}.  

Using the Jacobi triple product identity from Proposition \ref{thetaTransform} item (3), we can simplify the theta functions to get that  $\vartheta\left(-2\tau ;3\tau\right) = iq^{-\frac23}\eta(\tau)$, $\vartheta\left(-\tau ;3\tau\right) = iq^{-\frac16}\eta(\tau)$, and $\vartheta\left(0;3\tau\right) = 0$.  Thus, 
\begin{align*} 
\mathscr{R}_3\left( \frac{\alpha_j}{\beta_j},-2\tau ; \tau \right) = - \frac12 q^{-\frac23} \eta(\tau) \sum_{\delta=0}^1 e\left(\frac{\alpha_j}{\beta_j} \delta \right) q^{\frac{\delta}{2}} R\left(\frac{3\alpha_j}{\beta_j} + (2-\delta)\tau ; 3\tau \right). 
\end{align*}
Using Proposition \ref{Rtransform} item (2), we can rewrite 
$$R\left(\frac{3\alpha_j}{\beta_j} + 2\tau ; 3\tau \right) = 2e\left(\frac{3\alpha_j}{2\beta_j} \right) q^{\frac58} - e \left(\frac{3\alpha_j}{\beta_j} \right) q^{\frac12} R\left(\frac{3\alpha_j}{\beta_j} - \tau; 3\tau \right),$$ 
so that 
\begin{multline}\notag
\sum_{\delta=0}^1 e\left(\frac{\alpha_j}{\beta_j} \delta \right) q^{\frac{\delta}{2}} R\left(\frac{3\alpha_j}{\beta_j} + (2-\delta)\tau ; 3\tau \right) = \\ 
2e\left(\frac{3\alpha_j}{2\beta_j} \right) q^{\frac58} + e \left(\frac{2\alpha_j}{\beta_j} \right) q^{\frac12} \sum_{\pm} \pm e \left(\mp \frac{\alpha_j}{\beta_j} \right)R\left(\frac{3\alpha_j}{\beta_j} \pm \tau; 3\tau \right).
\end{multline} 
Thus we see that 
\begin{multline}\label{eq:F-}
\mathcal A_n^-(\tau) = -\frac12 \sum_{j=1}^{n}\frac{(\zeta_{2\beta_j}^{-3\alpha_j} - \zeta_{2\beta_j}^{-\alpha_j})}{\displaystyle\Pi^\dag_{j} ( {\bs{\alpha_k}})} e \left(\frac{2\alpha_j}{\beta_j} \right) q^{-\frac16} \sum_{\pm} \pm e \left(\mp \frac{\alpha_j}{\beta_j} \right) R\left(\frac{3\alpha_j}{\beta_j} \pm \tau; 3\tau \right) \\
-q^{-\frac{1}{24}} \sum_{j=1}^{n}\frac{(\zeta_{2\beta_j}^{-3\alpha_j} - \zeta_{2\beta_j}^{-\alpha_j})}{\displaystyle\Pi^\dag_{j} ( {\bs{\alpha_k}})} e \left(\frac{3\alpha_j}{2\beta_j} \right).
\end{multline}
Now to compute $\mathcal A_n^-(S_\ell \tau)$ we first define 
\begin{align*}
F_{\alpha,\beta}(\tau):= q^{-\frac16} \sum_{\pm}  \pm e \left(\mp \frac{\alpha}{\beta} \right) R\left(\frac{3\alpha}{\beta} \pm \tau; 3\tau \right).
\end{align*}
Then by \eqref{eq:HviaA-} and \eqref{eq:F-} we can write
\begin{multline*}
H_{n,S_\ell}(\tau) = \frac12 \sum_{j=1}^{n}\frac{(\zeta_{2\beta_j}^{-3\alpha_j} - \zeta_{2\beta_j}^{-\alpha_j})}{\displaystyle\Pi^\dag_{j} ( {\bs{\alpha_k}})} e \left(\frac{2\alpha_j}{\beta_j} \right) \left[ F_{\alpha_j,\beta_j}(\tau) - \zeta_{24}^{-\ell}(\ell \tau +1)^{-\frac12}F_{\alpha_j,\beta_j}(S_\ell \tau) \right] \\
 +\sum_{j=1}^{n}\frac{(\zeta_{2\beta_j}^{-3\alpha_j} - \zeta_{2\beta_j}^{-\alpha_j})}{\displaystyle\Pi^\dag_{j} ( {\bs{\alpha_k}})} (\ell\tau+1)^{-\frac12}\zeta_{24}^{-\ell}\mathcal E_1\left(\frac{\alpha_j}{\beta_j},\ell;\tau\right), 
\end{multline*}
where
\begin{equation}\label{def_mathcalE}
\mathcal E_1\left(\frac{\alpha}{\beta},\ell;\tau\right):=(\ell\tau+1)^{\frac12} \zeta_{24}^\ell q^{-\frac{1}{24}}e\left(\frac32 \frac{\alpha}{\beta} \right) - e\left(\frac{-S_\ell\tau}{24} \right)e\left(\frac32 \frac{\alpha}{\beta} \right).
\end{equation}
 
Thus in order to prove that $H_{n,S_\ell}(x)$ is analytic on $\mathbb{R} - \{\frac{-1}{\ell}\}$ it suffices to show that for each $1\leq j \leq n$,
\begin{align*}
G_{\alpha_j,\beta_j }(\tau) :=  F_{\alpha_j,\beta_j}(\tau) - \zeta_{24}^{-\ell}(\ell \tau +1)^{-\frac12}F_{\alpha_j,\beta_j}(S_\ell \tau) 
\end{align*}
is analytic on $\mathbb{R} - \{\frac{-1}{\ell}\}$. We establish this in Proposition \ref{prop_Habanalytic} below.

\begin{proposition} \label{prop_Habanalytic} Fix $1\leq j \leq n$ and set $(\alpha, \beta) := (\alpha_j, \beta_j)$.  With notation and hypotheses as above, we have that 
\begin{align*}  
G_{\alpha,\beta}(\tau) = \sqrt{3}\sum_{\pm}\mp e\left(\mp\frac16\right) \int_{\frac{1}{\ell}}^{i\infty}\frac{g_{\pm\frac13 + \frac12, \frac12-3\frac{\alpha}{\beta}}(3\rho)}{\sqrt{-i(\rho+\tau)}}d\rho,
\end{align*}
which is analytic on $\mathbb R - \left \{\frac{-1}{\ell}\right \}$.
\end{proposition}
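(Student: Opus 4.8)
The function $G_{\alpha,\beta}$ records the failure of the nonholomorphic pieces of $\mathcal A_n^-$ --- which are built out of Zwegers' function $R$ through $F_{\alpha,\beta}$ --- to be modular under $S_\ell$; since $\widehat{\mathcal A}_n$ is genuinely modular (Theorem 1.1 of \cite{F-K}), this failure is exactly what survives in $H_{n,S_\ell}$, and the plan is to compute it directly from the transformation laws of $R$ in Proposition \ref{Rtransform}. The starting observation is that the modular variable appearing in $F_{\alpha,\beta}$ is $3\tau$, and since $3S_\ell\tau = S_{\ell/3}(3\tau)$ with $\ell/3\in\Z$ (which holds in both cases of \eqref{def_ell}), I may track the action of $S_\ell$ on $F_{\alpha,\beta}$ through the action of $S_{\ell/3}=\left(\begin{smallmatrix}1&0\\\ell/3&1\end{smallmatrix}\right)$ on the variable $w:=3\tau$. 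Writing $S_{\ell/3}=S\,T^{-\ell/3}\,S^{-1}$ reduces everything to the two generators $S\colon w\mapsto -1/w$ and $T\colon w\mapsto w+1$, for which $R$ transforms by items (5) and (4) of Proposition \ref{Rtransform}, respectively.

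First I would transform $R\!\left(\tfrac{3\alpha}{\beta}\pm S_\ell\tau;3S_\ell\tau\right)$ by applying item (5) (the $S$-transformation, which introduces the Mordell integral $h$), then item (4) iterated $\ell/3$ times (the $T^{-\ell/3}$-transformation, contributing a factor $e^{\pi i\ell/12}$), then item (5) again, using the elliptic transformations (1)--(3) to return the argument of $R$ to $\tfrac{3\alpha}{\beta}\pm\tau$. The hypothesis $\beta\mid\ell$ (so that $\tfrac{\alpha\ell}{\beta}\in\Z$) guarantees that these argument shifts are by lattice points, so the resulting ``modular'' term reassembles precisely $\zeta_{24}^{\ell}(\ell\tau+1)^{\frac12}F_{\alpha,\beta}(\tau)$; this cancels in $G_{\alpha,\beta}(\tau)=F_{\alpha,\beta}(\tau)-\zeta_{24}^{-\ell}(\ell\tau+1)^{-\frac12}F_{\alpha,\beta}(S_\ell\tau)$, leaving only the Mordell-integral contributions. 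I would then rewrite each such $h$ via Theorem \ref{thm_Zh2}, after using Proposition \ref{prop_Zg}(1),(2) to reduce its parameters into $(-\tfrac12,\tfrac12)$; matching $\tfrac{3\alpha}{\beta}\pm\tau=\pm\tfrac13 w+\tfrac{3\alpha}{\beta}$ against $aw-b$ gives $a=\pm\tfrac13$ and $b=-\tfrac{3\alpha}{\beta}$, producing exactly $g_{\pm\frac13+\frac12,\frac12-3\frac{\alpha}{\beta}}$. Finally, the substitution $\rho\mapsto 3\rho$ sends $g(\rho)$ in the variable $w=3\tau$ to $g(3\rho)$, converts $\sqrt{-i(\rho+w)}$ into $\sqrt3\,\sqrt{-i(\rho+\tau)}$ (whence the overall $\sqrt3$), and moves the lower limit to $\tfrac1\ell$; a check that the accumulated phases collapse --- using $q^{-\frac16}$ from $F_{\alpha,\beta}$ against $e(\tfrac{a^2w}{2})=q^{\frac16}$, and $e(\mp\tfrac{\alpha}{\beta})$ against the $e(\pm\tfrac{\alpha}{\beta})$ coming from $-a(b+\tfrac12)$ --- leaves the stated coefficient $\mp e(\mp\tfrac16)$.

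The analyticity of the resulting integral on $\R-\{\tfrac{-1}{\ell}\}$ is the routine final step. Along the vertical contour $\rho\in\tfrac1\ell+i[0,\infty)$ the theta function $g_{\pm\frac13+\frac12,\,\cdot}(3\rho)$ decays exponentially as $\rho\to i\infty$ (the summation index $\nu\in(\pm\tfrac13+\tfrac12)+\Z$ never vanishes, since $\pm\tfrac13+\tfrac12\notin\Z$, so there is no constant term), while the only zero of $\sqrt{-i(\rho+\tau)}$ on the contour occurs at the endpoint $\rho=\tfrac1\ell$ precisely when $\tau=-\tfrac1\ell$. Hence the integral converges locally uniformly in $\tau$ on $\R-\{\tfrac{-1}{\ell}\}$ and, the integrand being holomorphic in $\tau$ there, defines an analytic function (cf.\ \cite{LZ,Zqmf}).

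I expect the main obstacle to be the bookkeeping in the middle paragraph: correctly composing the three maps $S$, $T^{-\ell/3}$, $S$ while tracking every automorphy factor, and in particular verifying that the two factors of $\sqrt{-iw}$ from the pair of $S$-transformations, together with the $T$-contributions, reassemble into the single factor $(\ell\tau+1)^{\frac12}$ and that all root-of-unity phases cancel against those built into $F_{\alpha,\beta}$. One must also confirm that the parameter reduction for Theorem \ref{thm_Zh2} is legitimate --- this is where the standing hypotheses $\gcd(\alpha,\beta)=1$, $\beta\neq2$, and $\tfrac\alpha\beta\notin\tfrac12\Z$ enter, ensuring $b=-\tfrac{3\alpha}{\beta}$ reduces strictly inside $(-\tfrac12,\tfrac12)$ (boundary cases being governed by the conventions of Remark \ref{rmk:closest_int}) --- and to check that the two Mordell-integral terms combine to the single contour from $\tfrac1\ell$ to $i\infty$ rather than from $0$.
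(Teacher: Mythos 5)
Your proposal is correct and takes essentially the same route as the paper's proof: the paper carries out your decomposition $S_{\ell/3}=ST^{-\ell/3}S^{-1}$ on $w=3\tau$ concretely by setting $\tau_\ell:=-\frac{1}{\tau}-\ell$ and applying Proposition \ref{Rtransform} (5), then (4) iterated, then (5) again, having first reduced $\frac{3\alpha}{\beta}=m+r$ with $r\in(-\frac12,\frac12)$ so that Theorem \ref{thm_Zh2} applies (your final-paragraph caveat about parameter reduction is exactly the paper's $m,r$ device, undone at the very end via Proposition \ref{prop_Zg} (2)). Just as in your sketch, the pure $R$-terms cancel against $F_{\alpha,\beta}(\tau)$ inside $G_{\alpha,\beta}$, the two Mordell integrals become $g$-integrals that recombine, after the change of variables $z=\frac{\ell}{3}-\frac1z$ and the $g$-transformations, into the single contour from $\frac{1}{\ell}$ to $i\infty$, and analyticity follows as you describe.
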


\begin{proof}
Fix $1\leq j \leq n$ and set $(\alpha, \beta) := (\alpha_j, \beta_j)$. Define $m := \left[\frac{3\alpha}{\beta} \right] \in \mathbb{Z}$, $r\in (-\frac12, \frac12)$ so that $\frac{3\alpha}{\beta} =m + r$.  We note that $r\neq \pm \frac12$ since $\beta\neq 2$.  Using Proposition \ref{Rtransform} (1), we have that
\begin{equation}\label{eq:F_jtau}
F_{\alpha,\beta}(\tau) = q^{-\frac16} \sum_{\pm} \pm e \left(\frac{\mp r}{3} \right) e \left( \frac{\mp m}{3} \right) (-1)^{m} R\left( \pm \tau + r; 3\tau \right).
\end{equation}
Letting $\tau_\ell := -\frac{1}{\tau} - \ell$ we have $S_\ell \tau = \frac{-1}{\tau_\ell}$.  Using  Proposition \ref{Rtransform} (5) with $u=\frac{r}{3} \tau_\ell \mp \frac13$ and $\tau \mapsto \frac{\tau_\ell}{3}$ we see that
\begin{multline}\label{eq:h1}
R\left(r \mp \frac{1}{\tau_\ell} ; \frac{-3}{\tau_\ell} \right) = \\
\sqrt{ \frac{-i\tau_\ell}{3} } \cdot e\left(-\frac12\left(\frac{r\tau_\ell}{3} \mp \frac13\right)^2\left(\frac{3}{\tau_\ell}\right)\right)\left[h\left(\frac{r\tau_\ell}{3} \mp \frac13; \frac{\tau_\ell}{3} \right) - R\left(\frac{r\tau_\ell}{3} \mp \frac13; \frac{\tau_\ell}{3} \right) \right].
\end{multline}
Using Proposition \ref{Rtransform} parts (1) and (4) we see that $R\left(\frac{r\tau_\ell}{3} \mp \frac13; \frac{\tau_\ell}{3} \right) = \zeta_{24}^\ell R\left( \frac{-r}{3\tau} \mp \frac13 ; \frac{-1}{3\tau}  \right)$.  Then using Proposition \ref{Rtransform} (5)   with $u=\mp \tau - r$ and $\tau \mapsto 3\tau$ we obtain that 
\[ R\left(\frac{r\tau_\ell}{3} \mp \frac13; \frac{\tau_\ell}{3} \right) = \zeta_{24}^\ell \sqrt{-i(3\tau)} \cdot e\left(\frac{-\left(\mp\tau - r\right)^2}{6\tau} \right) \left[h\left( \mp\tau - r; 3\tau\right) - R\left( \mp\tau - r ; 3\tau \right) \right], \] 
which together with \eqref{eq:F_jtau} and \eqref{eq:h1}  gives
\begin{multline} \notag
F_{\alpha,\beta}(S_\ell \tau) = \\
e\left(\frac{1}{6\tau_\ell} \right) \sum_{\pm} \pm  e \left(\frac{\mp r}{3} \right) e \left( \frac{\mp m}{3} \right) (-1)^{m}
\sqrt{ \frac{-i\tau_\ell}{3} } \cdot e\left(-\frac12\left(\frac{r\tau_\ell}{3} \mp \frac13\right)^2\left(\frac{3}{\tau_\ell}\right)\right) \cdot \\
\left[h\left(\frac{r\tau_\ell}{3} \mp \frac13; \frac{\tau_\ell}{3} \right) - 
\zeta_{24}^\ell \sqrt{-i(3\tau)} \cdot e\left(\frac{-\left(\mp\tau - r\right)^2}{6\tau}\right) \left[h\left( \mp\tau - r; 3\tau\right) - R\left( \mp\tau - r ; 3\tau \right)\right] \right].
\end{multline}
By the definition of $r$ and $\ell$ we have that $\frac{r^2\ell}{6} \in \mathbb{Z}$.  Simplifying thus gives that
\begin{multline*}
F_{\alpha,\beta}(S_\ell \tau) = \sum_{\pm} \pm (-1)^{m} e\left( \frac{\mp m }{3}\right) e\left(\frac{r^2}{6\tau} \right) \sqrt{\frac{-i\tau_\ell}{3}} h\left( \frac{r\tau_\ell}{3} \mp \frac13 ; \frac{\tau_\ell}{3}\right) \\ 
 - \sum_{\pm}  \pm  (-1)^{m} e\left( \frac{\mp m}{3}\right) e\left(\frac{\mp r}{3} \right) q^{-\frac16} \zeta_{24}^\ell (\ell\tau + 1)^{\frac12} \cdot h\left(\mp \tau - r ; 3\tau \right) \\
 + \sum_{\pm}  \pm (-1)^{m} e\left( \frac{\mp m}{3}\right) e\left(\frac{\mp r}{3} \right) q^{-\frac16} \zeta_{24}^\ell (\ell\tau + 1)^{\frac12} \cdot R\left(\mp \tau - r ; 3\tau \right),
\end{multline*}
and so using Proposition \ref{Rtransform} (3) and the fact that $h(u;\tau)=h(-u;\tau)$ which comes directly from the definition of $h$   in \eqref{def_hmordell}, we see that
\begin{multline}\notag
G_{\alpha,\beta}(\tau) = q^{-\frac16} \sum_{\pm}\pm (-1)^{m} e\left( \frac{\mp m}{3}\right) e\left(\frac{\mp r}{3} \right)h\left(\pm \tau + r ; 3\tau \right) \\
 - \sum_{\pm}\pm (-1)^{m} e\left( \frac{\mp m}{3}\right) e\left(\frac{r^2}{6\tau}\right) \zeta_{24}^{-\ell}  \sqrt{\frac{i}{3\tau}} \cdot h\left( \frac{r\tau_\ell}{3} \mp \frac13 ; \frac{\tau_\ell}{3}\right).
\end{multline}
We now use Theorem \ref{thm_Zh2} to convert the $h$ functions into integrals.  Letting $a=\frac{\pm 1}{3}$, $b=-r$, and $\tau \mapsto 3\tau$ gives that 
\[ h\left(\pm \tau + r ; 3\tau \right) = -q^{\frac16} \zeta_6^{\mp 1} e\left(\frac{\pm r}{3} \right) \int_{0}^{i\infty} \frac{g_{\pm\frac13 + \frac12, \frac12 -r}(z) dz}{\sqrt{-i(z+3\tau)}}. \]
Letting $a=r$, $b=\frac{\pm 1}{3}$, and $\tau \mapsto \frac{\tau_\ell}{3}$ gives that 
\[h\left( \frac{r\tau_\ell}{3} \mp \frac13 ; \frac{\tau_\ell}{3}\right) = -e\left(\frac{-r^2}{6\tau} \right) e\left(\frac{\mp r}{3} \right) e\left(\frac{-r}{2} \right) \int_{0}^{i\infty} \frac{g_{r + \frac12, \pm\frac13 + \frac12}(z) dz}{\sqrt{-i\left(z+\frac{\tau_\ell}{3} \right)}}.\] 
Thus 
\begin{multline}\notag
G_{\alpha,\beta}(\tau) = -\sum_{\pm}  \pm  \zeta_6^{\mp1}(-1)^{m} e\left( \frac{\mp m}{3}\right) \int_{0}^{i\infty} \frac{g_{\pm\frac13 + \frac12, \frac12 -r}(z) dz}{\sqrt{-i(z+3\tau)}} \\
+\sum_{\pm}  \pm \zeta_{24}^{-\ell}(-1)^{m} e\left( \frac{\mp m}{3}\right)e\left(\frac{\mp r}{3} \right) e\left(\frac{-r}{2} \right) \sqrt{\frac{i}{3\tau}} \int_{0}^{i\infty} \frac{g_{r+ \frac12, \pm\frac13 + \frac12}(z) dz}{\sqrt{-i\left(z+\frac{\tau_\ell}{3} \right)}}.
\end{multline}

By a simple change of variables (let $z=\frac{\ell}{3} - \frac{1}{z}$) we can write 
\begin{equation} \label{eq:int_convert}
\int_{0}^{i\infty} \frac{g_{r + \frac12, \pm\frac13 + \frac12}(z) dz}{\sqrt{-i\left(z+\frac{\tau_\ell}{3} \right)}} = -\sqrt{-3\tau} \int_{\frac{3}{\ell}}^{0} \frac{g_{r + \frac12, \pm\frac13 + \frac12}\left(\frac{\ell}{3} - \frac{1}{z} \right) dz}{z^{\frac32}\sqrt{-i(z+3\tau)}}.
\end{equation}
Moreover, using  Proposition \ref{prop_Zg} we can convert
\begin{multline} \label{eq:g_convert}
g_{r + \frac12, \pm\frac13 + \frac12}\left(\frac{\ell}{3} - \frac{1}{z} \right) = \zeta_{24} ^\ell \cdot  g_{r-\frac12, \pm\frac13 + \frac12}\left( \frac{-1}{z}\right)  \\ 
= -\zeta_{24} ^\ell e\left(\frac18 \right) e\left(\frac{\mp1}{6} \right) e\left(\frac{\pm r}{3} \right) e\left(\frac{r}{2} \right)  z^{\frac32} \cdot  g_{\pm\frac13 + \frac12, \frac12 - r}(z).
\end{multline}
Thus by \eqref{eq:int_convert} and \eqref{eq:g_convert} we have that
\begin{align}
G_{\alpha,\beta}(\tau) &=- \sum_{\pm} \pm \zeta_6^{\mp1}(-1)^{m} e\left( \frac{\mp m}{3}\right) \int_{0}^{i\infty} \frac{g_{\pm\frac13 + \frac12, \frac12 -r}(z) dz}{\sqrt{-i(z+3\tau)}} \notag \\  
 &\hspace{.5in}-   \sum_{\pm} \pm  \zeta_6^{\mp1}(-1)^{m } e\left( \frac{\mp m}{3}\right)  \int_{\frac{3}{\ell}}^{0}\frac{g_{\pm\frac13 + \frac12, \frac12 -r}(z) dz}{\sqrt{-i(z+3\tau)}} \notag \\ 
&= -\sum_{\pm}  \pm \zeta_6^{\mp1}(-1)^{m} e\left( \frac{\mp m}{3}\right)  \int_{\frac{3}{\ell}}^{i\infty}\frac{g_{\pm\frac13 + \frac12, \frac12 -r}(z) dz}{\sqrt{-i(z+3\tau)}}. \label{Hab_integral}
\end{align}
 
To complete the proof, one can deduce from Proposition \ref{prop_Zg} (2) that for $m\in\Z$,
\begin{align*}
g_{a,b}(\tau)=e(m a)g_{a,b-m}(\tau).
\end{align*}
Applying this to \eqref{Hab_integral} with a direct calculation gives us 
\[
G_{\alpha,\beta}(\tau) = \sqrt{3}\sum_{\pm}\mp e\left(\mp\frac16\right) \int_{\frac{1}{\ell}}^{i\infty}\frac{g_{\pm\frac13 + \frac12, \frac12-3\frac{\alpha}{\beta}}(3z)}{\sqrt{-i(z+\tau)}}dz,
\]
 which is analytic on $\mathbb{R} - \{\frac{-1}{\ell}\}$ as desired. 
\end{proof}
 
\section{Conclusion}

We have proven that when we restrict to vectors $\boldsymbol{\zeta_n}$ which contain distinct roots of unity, the mock modular form $q^{-\frac{1}{24}} R_n(\boldsymbol{\zeta_n};q)$ is also a quantum modular form.  To consider the more general case where we allow roots of unity in $\boldsymbol{\zeta_n}$ to repeat, the situation is significantly more complicated.  In this setting, as shown in \cite{F-K}, the nonholomorphic completion of $q^{-\frac{1}{24}} R_n(\boldsymbol{\zeta_n};q)$ is not modular, but is instead a sum of two (nonholomorphic)     modular forms of different weights.  We will address this more general case in a forthcoming paper \cite{FJKS}.

\end{document}